\title{On Generalized Dirichlet Integrals in the Smooth and in the O-minimal Setting}
\author{Gian Maria Dall'Ara}
\address{Istituto Nazionale di Alta Matematica ``F. Severi"\\ Research Unit Scuola Normale Superiore\\
	Piazza dei Cavalieri, 7, 56126, Pisa (Italy)}
\email{dallara@altamatematica.it}
\thanks{2020 Mathematics Subject Classification:}
\date{\today}
\newcommand{\R}{\mathbb{R}}
\newtheorem{thm}{Theorem}
\newtheorem{prp}[thm]{Proposition}
\newtheorem{lem}[thm]{Lemma}
\newtheorem{cor}[thm]{Corollary}
\newtheorem{dfn}{Definition}
\newtheorem{ex}{Example}
\begin{document}

\maketitle

\tableofcontents

\begin{abstract}
Given a compact manifold $M$ equipped with smooth vector fields $X_1,\ldots, X_r$, we consider the generalized Dirichlet energy \[\mathbf{E}(f)= \sum_{j=1}^r\int_M |X_jf|^2\, dm,\] where $dm$ is a volume form, and ask if the set \[
\mathcal{B}=\{f\in L^2(M)\colon\,\mathbf{E}(f)+\lVert f\rVert_{L^2(M)}^2\leq 1 \}
\] is precompact in $L^2(M)$. We find a geometric sufficient condition in terms of "iterated characteristic sets" and use it to show that, if the vector fields are tame (in the sense of o-minimality) and satisfy the Hörmander condition of some order $s$ on a dense set of points, then the only obstruction to precompactness is the existence of a characteristic submanifold (i.e.~a nonempty submanifold of positive codimension to which each $X_j$ is tangent). Implications for global regularity of sum-of-squares operators not necessarily satisfying Hörmander condition are discussed in an appendix.
	\end{abstract}

\section{Introduction}

Let $M$ be a smooth compact manifold without boundary and let $X_1,\ldots, X_r$ be a finite collection of smooth vector fields, globally defined on $M$. If $dm$ is a smooth volume form on $M$, we consider the \emph{generalized Dirichlet energy}  \begin{equation}\label{eq:dirichlet_energy}
\mathbf{E}(f)= \sum_{j=1}^r\int_M |X_jf|^2\, dm.
\end{equation}
This is well-defined for every $f\in L^2(M)$ with the property that the distributional derivative $X_jf$ is in $L^2(M)$ for every $j$.

The basic question investigated in this paper is the following: \emph{under which assumptions on $X_1,\ldots, X_r$ is the set \[
\mathcal{B}=\{f\in L^2(M)\colon\, X_jf\in L^2(M)\quad \forall j\quad \&\quad \mathbf{E}(f)+\lVert f\rVert_{L^2(M)}^2\leq 1\}
\]
precompact in $L^2(M)$?} Here $\lVert f\rVert_{L^2(M)}^2=\int_M|f|^2\, dm$. We will refer to $\mathcal{B}$ as the \emph{Dirichlet energy ball} associated with the given vector fields.

Precompactness of $\mathcal{B}$ may be thought of as a form of the so-called Dirichlet principle. Notice that its validity does not depend on the choice of volume form. By a classical theorem of Kohn and Nirenberg (Theorem 3 in \cite{kohn_nirenberg}), it implies smoothness of global (weak) solutions of an associated sum-of-squares operator. This is explained in some detail in Appendix \ref{sec:hypoellipticity}. We remark that global hypoellipticity of sum-of-squares and related operators has been investigated by several authors. See, e.g., \cite{amano, oleinik_radkevic,greenfield_wallach, fujiwara_omori, hounie ,gramchev_popivanov_yoshino, bergamasco_cordaro_malagutti, omori_kobayashi, himonas_petronilho, himonas_petronilho_dossantos, araujo_ferra_ragognette}.  \newline 

The content of the paper is the following. In Section \ref{sec:weierstrass_hormander} we recall the well-known rôle played in our problem by characteristic submanifolds (for the given family of vector fields) and by the Hörmander condition on iterated commutators. A proof of a generalized Weierstrass counterexample presented in this section is provided, for the sake of completeness, in Appendix \ref{sec:weierstrass}. 

In Section \ref{sec:degeneration}, the spotlight is on the \emph{degeneration locus}, that is, the set of points where Hörmander condition fails. Our answer to the precompactness question above depends on the structure of this set, and more precisely of its descending chain of \emph{iterated characteristic subsets} built by iteratively removing the non-characteristic points. We state a sufficient condition for precompactness (Theorem \ref{thm:sufficient}), whose proof is deferred to Section \ref{sec:sufficient}. Appendix \ref{sec:hypoellipticity} contains some remarks on the relation of Theorem \ref{thm:sufficient} to a theorem of K. Amano \cite{amano}.

In Section \ref{sec:o_minimal} we restrict our considerations to tame vector fields, that is, vector fields not displaying any of the "pathologies" that may appear in the smooth category. To formalize the notion of "non-pathological", we use o-minimality, a theory originating in model theory that has recently served a similar purpose in various settings (e.g., Diophantine \cite{pila_icm}, algebraic \cite{fresan} and complex analytic geometry \cite{peterzil_starchenko}). The idea is that, by restricting consideration to the o-minimal setting, we may achieve a neat geometric characterization of precompactness of the Dirichlet energy ball in a significant generality (Theorem \ref{thm:tame}), going well-beyond the real-analytic setting. The proof of Theorem \ref{thm:tame} occupies Sections \ref{sec:proof_tame_1}, \ref{sec:proof_tame_2}, and \ref{sec:proof_tame_3}. The key to the proof is a finite stabilization lemma (Lemma \ref{lem:finite_termination}) for the iterated characteristic sets introduced in Section \ref{sec:degeneration}, which is valid in the o-minimal setting. \newline 

This note has been significantly influenced by the ideas of various people. The iterated characteristic sets considered here are a simpler analogue of the iterated derived distributions of my paper \cite{dallara_mongodi} with S. Mongodi. The idea of proving a finite stabilization lemma in the o-minimal setting has been given to me by B. Lamel at the 2022 iteration of the Levico conference "CR geometry and PDEs". In a forthcoming joint paper with A. Berarducci, B. Lamel, M. Mamino and S. Mongodi, we work it out in the setting of the $\overline\partial$-Neumann problem. The proof of Lemma \ref{lem:finite_termination} below is essentially a simpler analogue of the proof of finite stabilization in that paper. Finally, the arguments in Section \ref{sec:sufficient} are inspired by a conversation I had with E. Straube around his paper \cite{straube}. 

\section{Weierstrass counterexample and Hörmander condition}\label{sec:weierstrass_hormander}

It was Weierstrass, in his famous criticism of Riemann's liberal usage of the Dirichlet principle, who first noticed that $\mathcal{B}$ need not be precompact in the space of square-integrable functions. More precisely, \cite[pp. 53-54]{weierstrass} essentially shows that precompactness fails in the one-dimensional case if there is a point at which all the vector fields $X_1,\ldots, X_r$ vanish. We now state a (well-known) generalization of the Weierstrass counterexample to higher dimensions, which gives us the opportunity of recalling a notion that is crucial for what follows. 

\begin{dfn} A characteristic submanifold, with respect to the vector fields $X_1,\ldots, X_r$, is a non-empty submanifold $N\subset M$ of positive codimension with the property that each $X_j$ is tangent to $N$ at each of its points. 
	\end{dfn}

\begin{prp}[Generalized Weierstrass Counterexample]\label{prp:weierstrass} Assume that there is a characteristic submanifold with respect to the given vector fields. Then the associated Dirichlet energy ball is not precompact in $L^2(M)$.	
	\end{prp}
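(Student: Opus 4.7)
My plan is to exhibit a bounded sequence in the Dirichlet energy space that is concentrated transversely to the characteristic submanifold $N$ and whose $L^2$-norm is bounded away from zero, yet which converges to $0$ pointwise almost everywhere. The mechanism is that tangency of each $X_j$ to $N$ forces the transverse components of $X_j$ to vanish to first order on $N$, and this is precisely what compensates for the extra derivative cost of concentrating transversely.

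Fix $p\in N$ and work in a submanifold chart: local coordinates $(x',x'')\in U'\times B\subset\R^{d'}\times\R^{d''}$ centred at $p$ in which $N\cap(U'\times B)=U'\times\{0\}$, with $d''=\operatorname{codim}_M N>0$. Writing $X_j=\sum_i a_j^i(x)\partial_{x'_i}+\sum_k b_j^k(x)\partial_{x''_k}$, the tangency hypothesis forces $b_j^k(x',0)\equiv 0$, so a first-order Taylor expansion in $x''$ yields $|b_j^k(x)|\leq C|x''|$ throughout the chart. This is the sole geometric input.

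Pick cutoffs $\chi\in C^\infty_c(U')$ nonzero near $0$ and $\eta\in C^\infty_c(\R^{d''})$ nonzero with $\operatorname{supp}\eta\subset\{|y|\leq 1/2\}$, and set
\[
f_n(x',x''):=\chi(x')\,n^{d''/2}\eta(n x''),
\]
extended by $0$ outside the chart (for large $n$ the support lies inside). The substitution $y=nx''$ shows that $\|f_n\|_{L^2(M)}^2$ tends as $n\to\infty$ to the positive constant $\bigl(\int|\chi(x')|^2\rho(x',0)\,dx'\bigr)\|\eta\|_{L^2}^2$, where $\rho$ denotes the density of $dm$ in these coordinates. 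For the energy, $X_jf_n=\sum_i a_j^i\partial_{x'_i}f_n+\sum_k b_j^k\partial_{x''_k}f_n$: each tangential term has no extra power of $n$ and is bounded in $L^2$ by direct rescaling, while in each transverse term the factor $n^{d''/2+1}$ produced by $\partial_{x''_k}\eta(nx'')$ is multiplied by $b_j^k=O(|x''|)$. After substituting $y=nx''$, the identity $|x''|^2=|y|^2/n^2$ cancels the extra $n^2$, leaving a uniformly bounded integral of the form $C\int|y|^2|\nabla\eta(y)|^2\,dy$. Hence $\mathbf{E}(f_n)+\|f_n\|_{L^2}^2$ is uniformly bounded, and dividing by a single constant places the rescaled sequence inside $\mathcal{B}$.

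Finally, for each $x$ with $x''\neq 0$ the point $nx''$ eventually leaves $\operatorname{supp}\eta$, so $f_n(x)\to 0$ pointwise a.e. Any $L^2$-convergent subsequence would then have limit $0$, contradicting the uniform lower bound on $\|f_n\|_{L^2}$; thus $\mathcal{B}$ is not precompact. The one step requiring real care is the energy bound, where it is essential to use the full first-order vanishing $|b_j^k|\lesssim|x''|$ on $N$ to defeat the extra factor of $n$ from the transverse derivative; all other steps reduce to bookkeeping under the rescaling $y=nx''$.
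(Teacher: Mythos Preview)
Your proof is correct and follows essentially the same scaling argument as the paper's Appendix~\ref{sec:weierstrass}: both choose adapted coordinates in which $N=\{x''=0\}$, use the first-order vanishing of the transverse coefficients $b_j^k$ to cancel the factor of $n$ coming from the transverse derivative, and conclude that the rescaled bumps $n^{d''/2}\eta(nx'')$ stay bounded in energy while their $L^2$ mass drifts away. The only cosmetic differences are that the paper uses a single test function in all variables (rather than the product $\chi(x')\eta(nx'')$) and leaves the final ``no $L^2$-convergent subsequence'' step implicit, whereas you spell it out via pointwise a.e.\ convergence to zero.
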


The proof is a standard scaling argument recalled in Appendix \ref{sec:weierstrass}.\newline 

Of course, the Weierstrass counterexample does not apply to the elliptic Dirichlet integral that was originally of interest to Riemann. Indeed, if the vector fields $X_1,\ldots, X_r$ span the tangent space at each point of $M$, then $\mathcal{B}$ is the unit ball (for an appropriate choice of equivalent norm) in the $L^2$-Sobolev space of order $1$ on $M$, which is precompact in $L^2(M)$ by a well-known compact embedding theorem. Hörmander's famous work on sum-of-squares operators gives a remarkable generalization of this fact: if the \emph{vector fields $X_1,\ldots, X_r$ together with their iterated commutators of arbitrary length} \[
[X_{i_1}, \ldots, [X_{i_{k-1}}, X_{i_k}]\ldots]\qquad (1\leq i_1, \ldots, i_k\leq r, \, k\geq 2)
\]\emph{span the tangent space to $M$ at every point}, then we have the subelliptic estimate (cf.~estimate (3.4) in \cite{hormander}) \begin{equation}\label{eq:subelliptic}
\lVert f\rVert_{W^s(M)}^2\leq C\left(\mathbf{E}(f)+\lVert f\rVert_{L^2(M)}^2\right)\qquad\forall f\in C^\infty(M).
\end{equation}
Here $W^s(M)$ is the $L^2$-Sobolev space of order $s$, $s$ is some small positive number (depending on the length of the commutators needed to span the tangent space at every point), and the constant is of course independent of $f$. This subelliptic estimate and the compact embedding $W^s(M)\hookrightarrow L^2(M)$ readily show that $\mathcal{B}\cap C^\infty(M)$ is precompact in $L^2(M)$ whenever the above \emph{Hörmander condition} on iterated commutators holds true. Since $\mathcal{B}\cap C^\infty(M)$ is $L^2$-dense in $\mathcal{B}$ by the classical Friedrichs Lemma (see, e.g., \cite[Lemma 25.4]{treves}), we have precompactness of $\mathcal{B}$ in this case.\newline 

The existence of a characteristic submanifold $N$ is an obstruction to the validity of the Hörmander condition, as any iterated commutator of the given vector fields must necessarily be tangent to $N$ at each of its points. On the other hand, the Hörmander condition may fail at some point without any characteristic submanifold passing through that point. Among the simplest examples, one may consider the vector fields $X_1=\partial_x$, $X_2=g(x)\partial_y$ on the $2$-torus $\mathbb{T}^2_{x,y}$, where $g$ vanishes to infinite order at a single point $x_*\in \mathbb{T}_x$. In this case, Hörmander condition trivially holds off the circle $\{x=x_*\}$, and fails at every point of the circle, because the non-zero iterated commutators of $X_1$ and $X_2$ are all of the form $\pm g^{(k)}(x)\partial_y$ ($k\geq 0$). Yet, the circle is transversal to $X_1$ and hence not characteristic. This favourable feature is responsible for the fact that the Dirichlet energy ball is precompact in this case, despite the failure of Hörmander condition (this follows from Theorem \ref{thm:sufficient} below). 

\section{Degeneration locus and a sufficient condition}\label{sec:degeneration}

To put our hands on vector fields not covered by Hörmander's result, it is natural to turn our attention to the \emph{degeneration locus}
\begin{eqnarray*}
Z&:=&\{p\in M\colon\, \text{Hörmander condition does not hold at $p$}\}\\
&=&\{p\in M\colon\,\mathrm{span}_{1\leq i_1,\ldots, i_k\leq r,\, k\geq 1}\{[X_{i_1}, \ldots, [X_{i_{k-1}}, X_{i_k}]\ldots](p)\}\neq T_pM\}, 
\end{eqnarray*}
which is a closed subset of $M$. This is the set of "bad" points with respect to the given vector fields. As the next proposition shows, for us the interesting case is when $Z$ has empty interior. 

\begin{prp}\label{prp:large_degeneration_locus} 
	Suppose that the degeneration locus $Z$ has nonempty interior. Then an open subset of $Z$ is foliated by characteristic submanifolds, and therefore the Dirichlet energy ball is not precompact. 
	\end{prp}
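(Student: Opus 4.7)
The plan is to exhibit, inside any open subset of $Z$, a smaller open set foliated by characteristic submanifolds; the desired non-precompactness then follows at once from Proposition \ref{prp:weierstrass}. The natural object to integrate is the (a priori singular) distribution $\mathcal{D}$ defined pointwise by
\[
\mathcal{D}(p) = \mathrm{span}_{1\leq i_1,\ldots, i_k\leq r,\, k\geq 1}\bigl\{[X_{i_1}, \ldots, [X_{i_{k-1}}, X_{i_k}]\ldots](p)\bigr\},
\]
which, by the very definition of $Z$, has rank $\rho(p):=\dim\mathcal{D}(p)$ strictly less than $\dim M$ at every point of a chosen nonempty open subset $U\subseteq Z$.

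The next step is to upgrade $\mathcal{D}$ to a genuine smooth distribution of constant rank on a still smaller open set. This relies on the fact that $\rho$ is lower semicontinuous: if $\rho(p_0)=k$, then iterated commutators $Y_1,\ldots,Y_k$ with $Y_1(p_0),\ldots,Y_k(p_0)$ linearly independent remain linearly independent near $p_0$, whence $\rho\geq k$ there. Consequently $\rho$ attains a maximum value $d\leq\dim M-1$ on $U$, and the level set $V:=\{p\in U:\rho(p)=d\}$ is open and nonempty. Near any $p_0\in V$, a choice of $d$ iterated commutators achieving the maximum at $p_0$ supplies a smooth local frame for $\mathcal{D}$, since they are $d$ linearly independent vectors in the $d$-dimensional space $\mathcal{D}(p)$ for $p$ close to $p_0$.

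To apply Frobenius on $V$ I need involutivity, which is automatic here: the Lie bracket of two iterated commutators of the $X_j$ lies in the Lie algebra generated by $X_1,\ldots,X_r$, hence is a finite $\R$-linear combination of iterated commutators, and therefore takes values in $\mathcal{D}$ at every point. Combined with constant rank, this makes $\mathcal{D}|_V$ a smooth involutive distribution of positive codimension $\dim M-d$. Frobenius then produces a foliation of $V$ whose leaves $N$ all have positive codimension; since each $X_j$ is itself an iterated commutator (the case $k=1$), it is a smooth section of $\mathcal{D}$ and hence tangent to every leaf. Each leaf is therefore a characteristic submanifold, and an appeal to Proposition \ref{prp:weierstrass} closes the argument. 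I expect no real obstacle; the only point demanding a bit of care is the passage from the possibly singular distribution $\mathcal{D}$ to its constant-rank restriction $\mathcal{D}|_V$ before invoking Frobenius, which is precisely where lower semicontinuity of $\rho$ earns its keep.
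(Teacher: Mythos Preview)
Your proposal is correct and follows essentially the same route as the paper: the paper's one-line proof invokes Frobenius (via Sussmann) on the open subset of $Z$ where the Lie algebra generated by the $X_j$ has maximal rank, then applies Proposition~\ref{prp:weierstrass}. You have simply unpacked that reference, spelling out the lower semicontinuity of $\rho$, the passage to the maximal-rank open set $V$, and the involutivity check, all of which are exactly what is needed to justify the Frobenius step.
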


This is an immediate corollary of Frobenius Theorem (e.g., one may apply \cite[Cor. 8.3.]{sussmann} to the open subset of $Z$ where the Lie algebra generated by the vector fields has maximal rank) and Proposition \ref{prp:weierstrass}. \newline 

As anticipated above, not all bad points are alike: characteristic submanifolds, which must be contained in $Z$, are really bad, while we will see that "non-characteristic points" are not obstructions to precompactness. In order to make the last statement precise, we need another definition, which allows us to talk about characteristic points of arbitrary subsets of $M$.  

\begin{dfn}\label{dfn:char}
Let $E$ be a subset of $M$. A point $p$ of $E$ is said to be characteristic, with respect to the given vector fields, if each $X_j$ is $C^1$-Zariski tangent to $E$ at $p$, and noncharacteristic otherwise. We denote by $\mathrm{char}(E)$ the set of characteristic points of $E$. 	
	\end{dfn}

Here we are using a specific notion of tangency of a vector to an arbitrary subset of a manifold, namely the following.

\begin{dfn} A vector $v\in T_pM$ is \emph{$C^1$-Zariski tangent} to $E$ if, for every $C^1$ function $f:M\rightarrow \R$ vanishing identically on $E$, we have $df_p(v)=0$.
	\end{dfn} 
It is easy to see that, if $E$ is closed, then $\mathrm{char}(E)$ is a closed subset of $E$. 

Whenever $\mathrm{char}(E)\subsetneq E$, tangency to the characteristic set is a more stringent condition than tangency to $E$, and $\mathrm{char}(\mathrm{char}(E))$ may happen to be smaller than $\mathrm{char}(E)$. Similarly, $\mathrm{char}(\mathrm{char}(\mathrm{char}(E)))$ may be even smaller etc. Thus, we are led to consider the \emph{chain of iterated characteristic sets of $E\subseteq M$}: \begin{equation}\label{eq:descending_chain}
E_0:=E \supseteq E_1\supseteq \ldots \supseteq E_k\supseteq \ldots, 
\end{equation}
where $E_{k+1}:=\mathrm{char}(E_k)$ for every $k\geq 0$. 

In particular, we may apply this construction to the degeneration locus $Z$. It is clear that any characteristic submanifold $N$ must be contained in $\cap_{k\geq 0}Z_k$. We observe that the notion of characteristic point and \eqref{eq:descending_chain} are well-defined for a collection of vector fields on an arbitrary (i.e., not necessarily compact) manifold. 

\begin{ex} Let us compute \eqref{eq:descending_chain} on three pairs of vector fields defined on $M=\R^2_{x,y}$. The reader may easily adapt these examples to the compact setting, e.g., on the $2$-torus. 
\begin{enumerate}
	\item If $X_1=e^{-\frac{1}{y^2}}\partial_x$ and $X_2=e^{-\frac{1}{x^2}}\partial_y$, then \[
	Z_0=\{x=0\}\cup \{y=0\}, \quad Z_1=\{0\}=Z_2=\ldots
	\]
\item If $X_1=\partial_x$ and $X_2=e^{-\frac{1}{x^2}}y\partial_y$, then \[
Z_0=\{x=0\}\cup \{y=0\}, \quad Z_1=\{y=0\}=Z_2=\ldots
\] 
\item  If $X_1=e^{-\frac{1}{y^2}}\partial_x$ and $X_2=\partial_x+e^{-\frac{1}{x^2}}y\partial_y$, then \[
Z_0=\{x=0\}\cup \{y=0\}, \quad Z_1=\{0\}, \quad Z_2=\emptyset=Z_3=\ldots
\]
	\end{enumerate}	
Notice that in all the above examples the chain stabilizes in finitely many steps to a characteristic submanifold or the empty set. 
	\end{ex}

We have the following theorem, whose proof is given in Section \ref{sec:sufficient}. 

\begin{thm}[Sufficient condition for precompactness of $\mathcal{B}$]\label{thm:sufficient}
Assume that there exists a natural number $k$ with the property that some iterated characteristic set $Z_k$ of the degeneration locus is empty. Then the Dirichlet energy ball $\mathcal{B}$ is precompact. 
	\end{thm}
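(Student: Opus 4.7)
The plan is a ``weak-to-strong'' argument. Since $\mathcal{B}$ is $L^2$-bounded, every sequence $\{f_n\}\subset\mathcal{B}$ admits a subsequence weakly converging to some $f_\infty$, and lower semicontinuity of $\mathbf{E}$ places $f_\infty\in\mathcal{B}$. Setting $g_n=f_n-f_\infty$, precompactness of $\mathcal{B}$ reduces to showing: if $g_n\rightharpoonup 0$ weakly in $L^2$ with $\mathbf{E}(g_n)+\lVert g_n\rVert_{L^2}^2$ uniformly bounded, then $g_n\to 0$ strongly in $L^2$. The intent is to establish this by induction on $j=0,1,\dots,k$, with inductive claim
\[
(H_j):\quad \phi g_n\to 0\text{ in }L^2(M)\text{ for every }\phi\in C_c^\infty(M)\text{ with }\mathrm{supp}(\phi)\cap Z_j=\emptyset.
\]
Since $Z_k=\emptyset$, $(H_k)$ applied with $\phi\equiv 1$ delivers the theorem.

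The base case $(H_0)$ is Hörmander's subelliptic estimate \eqref{eq:subelliptic}: on any compact subset of $M\setminus Z$, iterated commutators span uniformly, so $\phi g_n$ is bounded in $W^s$ for the corresponding $s>0$; weak convergence and Rellich's compact embedding yield $\phi g_n\to 0$ in $L^2$. For the inductive step $(H_j)\Rightarrow(H_{j+1})$, take $\phi$ with $\mathrm{supp}(\phi)\cap Z_{j+1}=\emptyset$; by a partition of unity it suffices to treat a small neighborhood $U_p$ of an arbitrary point $p\in\mathrm{supp}(\phi)\cap(Z_j\setminus Z_{j+1})$, since cutoffs supported away from $Z_j$ are handled directly by $(H_j)$. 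Non-characteristic-ness at $p$ provides $X_i$ and a $C^1$ function $h$ with $h|_{Z_j}=0$ and $X_ih(p)\ne 0$; after shrinking $U_p$, we may assume $X_ih\ge c>0$ there and $Z_j\cap U_p\subseteq\{h=0\}$. The heart of the argument is a one-dimensional Poincaré estimate along the flow of $X_i$: for a cutoff $\psi_p$ supported in $U_p$,
\[
\int_{\{|h|<\delta\}}|\psi_p g_n|^2\,dm\ \le\ C\int_{\{\delta<|h|<2\delta\}}|\psi_p g_n|^2\,dm+C\delta^2\,\mathbf{E}(\psi_p g_n).
\]
The shell on the right avoids $Z_j$, so $(H_j)$ sends the first term to $0$ as $n\to\infty$; the second is uniformly $O(\delta^2)$; and the complementary region $\{|h|\ge\delta\}\cap U_p$ is also disjoint from $Z_j$, hence handled by $(H_j)$. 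Letting $n\to\infty$ and then $\delta\to 0$ yields $\psi_p g_n\to 0$ in $L^2$, closing the induction.

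The main technical obstacle is producing the Poincaré estimate cleanly, complicated by the fact that $h$ is only $C^1$. The plan is to straighten the smooth field $X_i$ to $\partial_s$ in local flow-box coordinates $(s,y)$ and treat $h$ merely as a monotone auxiliary parameter along each integral curve (since $\partial_s h=X_ih\ge c>0$). The fundamental theorem of calculus in $s$ gives a pointwise bound $|\psi_p g_n(s,y)|^2\le 2|\psi_p g_n(s_0,y)|^2+2|s-s_0|\int|\partial_\sigma(\psi_p g_n)|^2\,d\sigma$ when $s_0$ lies on a nearby level $\{h=-\delta'\}$; averaging over $\delta'\in[\delta,2\delta]$ converts the boundary term into the shell integral and delivers the inequality, while $\mathbf{E}(\psi_p g_n)\le C(\mathbf{E}(g_n)+\lVert g_n\rVert_{L^2}^2)$ follows by Leibniz. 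Once this estimate is secured, the remaining induction and partition-of-unity bookkeeping are routine.
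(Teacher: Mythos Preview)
Your argument is correct and follows essentially the same route as the paper: the analytic heart is the one-dimensional Poincar\'e estimate along a vector field transversal to $Z_j$ at a noncharacteristic point (this is exactly the paper's Lemma~\ref{lem:flow}, proved in the same flow-box fashion), fed into an induction along the chain $Z_0\supseteq Z_1\supseteq\cdots\supseteq Z_k=\emptyset$. The only difference is packaging: the paper runs a backward induction to produce an a~priori compactness estimate $\lVert f\rVert_{L^2}^2\le\epsilon\,\mathbf{E}(f)+C(\epsilon)\lVert f\rVert_{W^{-1}}^2$ (via Sobolev interpolation and the localized subelliptic estimate), whereas you run a forward sequential weak-to-strong argument, invoking Rellich directly in the base case and thereby sidestepping the interpolation step.
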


See Appendix \ref{sec:hypoellipticity} for a few remarks on the implications of the above result for global hypoellipticity of sum-of-squares operators and for a comparison with a related one discovered by K.~Amano (Theorem 1 of \cite{amano}) in the context of hypoellipticity of second-order linear PDEs. \newline 

The hypothesis of Theorem \ref{thm:sufficient} may be viewed as a strenghtening of the condition "there are no characteristic submanifolds". In this sense, the generalized Weierstrass counterexample of Proposition \ref{prp:weierstrass} and the sufficient condition of Theorem \ref{thm:sufficient} complement each other. Yet, the two results fall short of giving a complete characterization of precompactness for arbitrary families of \emph{smooth} vector fields. This is due to the possibility of "wild" behaviors, ultimately stemming from the basic fact that zero sets of smooth functions are nothing better than arbitrary closed sets. More precisely: 
 \begin{enumerate}
\item \emph{The chain of iterated characteristic sets of the degeneration locus may stabilize after $k$ steps with $Z_k=Z_{k+1}=\ldots$ nonempty and not containing any characteristic submanifold}. For a simple instance of this phenomenon, consider $M=\mathbb{T}^2_{x,y}$ and $X_1=\partial_x$, $X_2=g(x)\partial_y$, with $g$ vanishing (necessarily to infinite order) exactly on a Cantor set (or, more generally, a perfect set). Every point of the degeneration locus $Z_0=\{x\colon\, g(x)=0\}\times \mathbb{T}_y$ is characteristic, and yet no proper submanifold of $Z_0$ is characteristic, as such a manifold would be contained in a vertical circle.
\item \emph{The chain of iterated characteristic sets of the degeneration locus may not stabilize at all}. Even more dramatically, the closed set $Z_\omega:=\cap_k Z_k$ may not be characteristic. This is seen by considering vector fields of the same form as in the previous item, taking this time for the zero set of $g$ a completely arbitrary closed set. We observe that if one defines $Z_\alpha$ for every ordinal $\alpha$ by transfinite recursion (letting $Z_\alpha=\cap_{\beta<\alpha}Z_\beta$ when $\alpha$ is a limit ordinal), then the Cantor--Bendixson theorem guarantees that this extended chain must eventually stabilize, i.e., $\mathrm{char}(Z_\alpha)=Z_\alpha$ for some countable ordinal. See \cite[Section 2]{dallara_mongodi} for the a detailed explaination of the same phenomenon in a slightly different context. As we have no good use for this observation for the problem at hand, we will not elaborate on it.  
\end{enumerate}

While these pathological behaviors seem to indicate that simple considerations on iterated characteristic sets of the degeneration locus do not allow to decide whether the Dirichlet energy ball is precompact, one may still hope that these difficulties will disappear if the ambient manifold $M$ and the vector fields $X_1,\ldots, X_r$ are restricted to live in a "tame" category not allowing the above pathologies. This is exactly the direction pursued in the next section. 

\section{The o-minimal setting}\label{sec:o_minimal}

It has been known since the 1990s that \emph{o-minimal structures}, originated in model theory, provide a convenient and satisfactory framework for developing "tame topology". By this expression one means a geometric theory that restricts consideration to a class of subsets of $\R^n$, or more general ambient spaces, enjoying all the nice "tameness" properties of semi-algebraic sets (e.g., a well-defined notion of integer dimension, stratifications into smooth cells etc.). See Wilkie's Séminaire Bourbaki \cite{wilkie} for a deeper discussion about the motivations for such a theory. All the results that we need from o-minimality are in van den Dries' book \cite{vandendries}, to which we refer for details. \newline 

Let us recall the notion of an \emph{o-minimal structure} on the ordered field of real numbers. This is a sequence $\mathcal{S}=(\mathcal{S}_n)_{n\geq 1}$ where $\mathcal{S}_n$ is a collection of subsets of $\R^n$, such that the following axioms are satisfied: \begin{enumerate}
\item $\mathcal{S}_n$ is a boolean algebra, that is, it contains the empty set and it is closed under finite unions and complementation, 
\item if $A\in \mathcal{S}_n$ and $B\in \mathcal{S}_m$, then $A\times B\in \mathcal{S}_{n+m}$, 
\item if $A\in \mathcal{S}_{n+1}$, then $\pi(A)\in \mathcal{S}_n$, where $\pi:\R^{n+1}\equiv \R^n\times \R\rightarrow \R^n$ is the projection map, 
\item $\mathcal{S}_n$ contains the semi-algebraic sets, that is, the sets defined by finitely many polynomial identities or inequalities, 
\item $\mathcal{S}_1$ consists precisely of the semi-algebraic sets, i.e., finite unions of points and open intervals. 
	\end{enumerate} 
The elements of $\mathcal{S}_n$ are called the \emph{definable subsets} of $\R^n$ (for the given o-minimal structure $\mathcal{S}$). A function $f:A\rightarrow \R^m$, where $A\subseteq \R^n$, is said to be definable if its graph $\{(x,f(x))\colon\, x\in A\}$ is a definable subset of $\R^{n+m}$. Axioms (1) to (4) are set theoretic reformulations of the notion of definable set in a first order expansion of the ordered field of real numbers. The last axiom is the crucial one, expressing the \emph{o-minimality} of the structure. 

The interest of o-minimal structures stems from two basic and very remarkable facts. The first is that, from the above five axioms alone, it is possible to derive a number of theorems expressing in various ways the non-pathological nature of definable sets. In other words, if no pathology is allowed in dimension one (axiom (5)), then no pathology will occur in higher dimensions either. The second fact is that there is a quite rich supply of o-minimal structures, well beyond semi-algebraic sets (which satisfy the above axioms by the famous Tarski--Seidenberg theorem).

 For example, by \cite{dries_macintyre_marker} (which extends a seminal work of Wilkie, see the introduction of \cite{dries_macintyre_marker}) there is an o-minimal structure for which the following two statements are true. \begin{enumerate}
\item[(i)] Every restricted analytic function is definable. A \emph{restricted analytic function} is the restriction to a closed cube of a function analytic in a neighborhood of it.   
\item[(ii)] The exponential function $\exp:\R\rightarrow \R$ is definable.	
	\end{enumerate}
To appreciate the power of (ii), one should realize that a global analytic function $f:\R^n\rightarrow \R$ cannot be expected to be definable in an o-minimal structure in general. E.g., the zero set of $\sin(x)$, which is definable in a structure where the function $\sin(x)$ is definable, is infinite and would violate axiom (5). Having the global exponential function at our disposal we may construct, by means of axioms (1) to (4), infinitely flat functions (e.g., $e^{-\frac{1}{x^2}}$) and hence various "bump functions". A geometry where (certain) bump functions are allowable is in a sense pretty far from the rigidity of, say, real analytic geometry. Notice that all the examples considered in Section \ref{sec:degeneration} are definable in a structure satisfying (ii). \newline 

Fix an o-minimal structure $\mathcal{S}$ as above. From now on, the word "definable" will refer to $\mathcal{S}$. Following \cite[p.109]{vandendries_article}, we may formulate the concept of definable smooth manifold (without boundary) by introducing an appropriate notion of atlas. Let $M$ be a topological space. An $n$-dimensional \emph{definable smooth atlas} on $M$ is a finite $n$-dimensional smooth atlas $\{(U_i,\phi_i)\}_{i=1}^N$ with the property that each open set $\phi_i(U_i)\subseteq \R^n$ is definable and each transition map $\phi_i(U_i\cap U_j)\rightarrow \phi_j(U_i\cap U_j)$ is definable (in particular, both $\phi_i(U_i\cap U_j)$ and $\phi_j(U_i\cap U_j)$ are definable). A \emph{definable smooth manifold} is then a Hausdorff topological space equipped with an equivalence class of smooth definable atlases (with respect to the usual compatibility relation). Of course, any definable smooth manifold is canonically a smooth manifold. A subset $A$ of a definable smooth manifold is said to be definable if $\phi_i(A\cap U_i)$ is definable for every chart $(U_i,\phi_i)$ in a definable atlas. Similarly, one may define definable maps between definable manifolds. \newline 

We can now prepare the stage for our theorem on Dirichlet integrals in the o-minimal setting. \emph{Let $M$ be a compact definable smooth manifold (without boundary) and let $X_1,\ldots, X_r$ be smooth and definable vector fields on $M$.} By "definable vector field" we mean a vector field which, when represented in a system of local coordinates coming from a definable atlas, has definable coefficients (more elegantly, a vector field which is definable when viewed as a map $M\rightarrow TM$, where the tangent bundle has the canonical definable manifold structure). 

As the degeneration locus $Z$ introduced in Section \ref{sec:degeneration} is an infinitary object, in the present setting we consider a tame analogue of it, for which we can actually prove definability. The \emph{$s$-step degeneration locus} is the set \begin{eqnarray*}
Z^{(s)}&:=&\{p\in M\colon\, \textrm{iterated commutators of length $\leq s$ do not span $T_pM$}\}\\
&=&\{p\in M\colon\,\mathrm{span}_{1\leq i_1,\ldots, i_k\leq r,\, k\leq s}\{[X_{i_1}, \ldots, [X_{i_{k-1}}, X_{i_k}]\ldots](p)\}\neq T_pM\}, 
\end{eqnarray*} where $s\geq 1$. To see that $Z^{(s)}$ is definable, we notice that: \begin{enumerate}
\item Each iterated commutator $X_I:=[X_{i_1}, \ldots, [X_{i_{k-1}}, X_{i_k}]]$, where $I=(i_1, \ldots, i_k)\in \{1,\ldots, r\}^k$ and $k\geq 1$, is definable (because the partial derivatives of a definable function are definable). 
\item  The complement of $Z^{(s)}$ is defined by the first order formula \[
\forall v\in T_pM\quad \exists (\lambda_I)_{I\in \cup_{k\leq s}\{1,\ldots, r\}^k}\in \R^{\cup_{k\leq s}\{1,\ldots, r\}^k}\colon\, \sum_I\lambda_IX_I(p) = v. 
\] 
	\end{enumerate}
For every $s\geq 1$, we have the associated chain of iterated characteristic sets \begin{equation}\label{s-chain}
Z^{(s)}_0:=Z^{(s)}\supseteq Z^{(s)}_1\supseteq\ldots\supseteq Z^{(s)}_k\supseteq \ldots, 
\end{equation}
where $Z_{k+1}^{(s)}:=\mathrm{char}(Z_k^{(s)})$ for every $k\geq 0$. We can now state the main theorem of this section, which gives a neat geometric characterization of precompactness for systems of definable vector fields whose iterated commutators of length less than a given threshold "generically" span the tangent space to the manifold. 

\begin{thm}[Characterization of precompactness of $\mathcal{B}$ in the tame setting]\label{thm:tame}
	Let $M$ be a compact definable smooth manifold and let $X_1,\ldots, X_r$ be smooth and definable vector fields on $M$. Suppose that there exists $s\geq 1$ such that $Z^{(s)}$ has empty interior. Then the following are equivalent: \begin{enumerate}
	\item the Dirichlet energy ball $\mathcal{B}$ (defined with respect to any smooth volume form) is precompact, 
	\item there is no characteristic submanifold, 
	\item $Z^{(s)}_k$ is empty for some $k<+\infty$ (and then for some $k\leq n+1$). 
	\end{enumerate}
	\end{thm}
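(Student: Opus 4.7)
My plan is to establish the cyclic implications $(1) \Rightarrow (2) \Rightarrow (3) \Rightarrow (1)$, with o-minimality entering only in the middle one.

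First, $(1) \Rightarrow (2)$ is the contrapositive of the Generalized Weierstrass Counterexample (Proposition \ref{prp:weierstrass}). Next, for $(3) \Rightarrow (1)$, I would reduce to Theorem \ref{thm:sufficient} using two easy monotonicity observations: from the definitions $Z \subseteq Z^{(s)}$ (the span of all commutators contains that of those of length $\leq s$), and the operator $\mathrm{char}$ preserves inclusion (if $E' \subseteq E$, every $C^1$ function vanishing on $E$ vanishes on $E'$, so Zariski tangency to $E'$ implies Zariski tangency to $E$). By induction $Z_k \subseteq Z^{(s)}_k$, so $Z^{(s)}_k = \emptyset$ forces $Z_k = \emptyset$ and Theorem \ref{thm:sufficient} supplies precompactness.

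The main content is $(2) \Rightarrow (3)$, which I would argue by contrapositive. Assuming no $Z^{(s)}_k$ is empty, the finite stabilization Lemma \ref{lem:finite_termination} --- the essential use of o-minimality --- yields some index $k_0 \leq n+1$ for which $E := Z^{(s)}_{k_0}$ is a nonempty closed definable set with $\mathrm{char}(E) = E$. Since $E \subseteq Z^{(s)}$ and the latter has empty interior by hypothesis, $E$ itself has empty interior; in particular $\dim E < n$. I would then invoke the o-minimal $C^1$-cell decomposition theorem to write $E$ as a finite disjoint union of smooth definable cells and pick a top-dimensional cell $N$: this is a smooth definable submanifold of $M$ of positive codimension, open in $E$ in the relative topology.

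It remains to show that $N$ is a characteristic submanifold, contradicting the failure of (2). Given $p \in N$, choose an open neighborhood $U$ of $p$ in $M$ with $E \cap U = N \cap U$ and a cut-off $\chi \in C^\infty_c(U)$ with $\chi \equiv 1$ near $p$. For any $C^1$ function $f : M \to \R$ vanishing on $N$, the product $g := \chi f$ is $C^1$, vanishes outside $U$, vanishes on $N \cap U = E \cap U$, and therefore vanishes on all of $E$; hence $\mathrm{char}(E) = E$ gives $dg_p(X_j(p)) = 0$. But $dg_p = df_p$ (using $\chi(p) = 1$ and $f(p) = 0$), so $df_p(X_j(p)) = 0$. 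Thus each $X_j(p)$ is Zariski tangent to the smooth submanifold $N$, which coincides with ordinary tangency, and $N$ is characteristic. The main obstacle in the whole plan is Lemma \ref{lem:finite_termination}, which carries the non-trivial o-minimal content; once it is available, reducing a stable nonempty characteristic set to an actual characteristic submanifold via cell decomposition is essentially formal.
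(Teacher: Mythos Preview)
Your proposal is correct and follows essentially the same route as the paper: $(1)\Rightarrow(2)$ via Weierstrass, $(3)\Rightarrow(1)$ via monotonicity of $\mathrm{char}$ and Theorem~\ref{thm:sufficient}, and $(2)\Rightarrow(3)$ by finite stabilization plus $C^1$ cell decomposition to extract a characteristic submanifold from a nonempty fixed point of $\mathrm{char}$. The only cosmetic difference is that the paper explicitly passes to a definable chart $\phi:U\to\R^n$ before invoking Lemma~\ref{lem:finite_termination} and the cell decomposition theorem (both stated for subsets of $\R^n$), whereas you apply them directly on $M$; your cutoff argument for why the top-dimensional cell $N$ is characteristic is a spelled-out version of the ``elementary observation'' the paper records inside the proof of Lemma~\ref{lem:finite_termination}.
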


The hypothesis that $Z^{(s)}$ has empty interior should be compared with Proposition \ref{prp:large_degeneration_locus}. In fact, our proof below proves that if \emph{the degeneration locus $Z=\cap_{s\geq 1}Z^{(s)}$ is contained in a definable set $E$ with empty interior}, then (1), (2) and \begin{enumerate}
	\item[(3')] $E_k$ is empty for some $k<+\infty$ (and then for some $k\leq n+1$)
	\end{enumerate} are equivalent. Here $E_k$ is the $k$-th iterated characteristic set of $E$.  

An inspection of the proof below shows that the tameness of the manifold and the vector fields is really needed only "in a neighborhood of the degeneration locus". We omit the statement of a more general, and slightly more cumbersome, result formalizing this fact.

\section{Proof of Theorem \ref{thm:sufficient}}\label{sec:sufficient}

The key tool is the following lemma stating that the $L^2$ mass of a function of bounded energy cannot be too concentrated near a nowhere characteristic set.

\begin{lem}\label{lem:flow}
Let $A\subseteq M$ be a closed set without characteristic points, that is, $\mathrm{char}(A)$ is empty. Let $V$ be an open neighborhood of $A$. Then there exists a constant $C>0$ such that the following holds. For every $\epsilon>0$ there exists a neighborhood $U_\epsilon$ of $A$ contained in $V$ such that the following inequality holds:\[
	\lVert f \rVert_{L^2(U_\epsilon)}^2 \leq \epsilon \mathbf{E}(f)+C \lVert f\rVert_{L^2(V\setminus U_\epsilon)}^2 \qquad \forall f\in C^\infty(M). 
	\]
\end{lem}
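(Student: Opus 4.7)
The strategy is a standard Hardy-type estimate on a flow tube. At each point of $A$ the noncharacteristic hypothesis produces a vector field among $X_1,\dots,X_r$ that pushes off $A$, and we use it together with the fundamental theorem of calculus to bound $\|f\|_{L^2(U_\epsilon)}^2$ by $\|f\|_{L^2(V\setminus U_\epsilon)}^2$ up to an $\epsilon$-small multiple of $\mathbf{E}(f)$.

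For each $p\in A$, the hypothesis $\mathrm{char}(A)=\emptyset$ yields an index $j(p)$ and a $C^1$ function $h_p:M\to\R$ with $h_p|_A\equiv 0$ and, after a possible sign change, $X_{j(p)}h_p(p)>0$. By continuity, in some neighborhood $W_p\subset V$ one has $X_{j(p)}h_p\ge c_p>0$ and $|dh_p|\le L_p$. Compactness of $A$ yields a finite subcover $W_1,\dots,W_N$; denote the corresponding functions by $h_k$, vector fields by $X_k:=X_{j(p_k)}$, and uniform constants by $c,L>0$. Shrink to open sets $W_k'\subset\subset W_k$ still covering $A$, and for $\tilde\delta>0$ sufficiently small set
\[
U_{\tilde\delta}:=\Big\{q\in \textstyle\bigcup_k W_k'\colon d(q,A)<\tilde\delta\Big\}\subset V,
\]
which will play the role of $U_\epsilon$, with $\tilde\delta=\tilde\delta(\epsilon)$ to be chosen at the end.

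The decisive geometric observation is that, since $A\subseteq\{h_k=0\}$, one has $d(q,A)\ge d(q,\{h_k=0\})\ge |h_k(q)|/L$ on each $W_k$; in particular, $q\in W_k'\cap U_{\tilde\delta}$ forces $|h_k(q)|\le L\tilde\delta$. Along the flow $\phi_t^k$ of $X_k$ the function $h_k(\phi_t^k(q))$ increases at rate at least $c$, so for $t$ in a window of length $\sim\tilde\delta/c$ starting near $2L\tilde\delta/c$ we obtain $h_k(\phi_t^k(q))\ge L\tilde\delta$, and therefore $\phi_t^k(q)\in V\setminus U_{\tilde\delta}$. Writing $f(q)=f(\phi_t^k(q))-\int_0^t X_k f(\phi_s^k(q))\,ds$, squaring, applying Cauchy--Schwarz, averaging over $t$ in the window, integrating over $q\in U_{\tilde\delta}\cap W_k'$ and changing variables (using that the flow Jacobian is uniformly comparable to $1$), one obtains
\[
\|f\|_{L^2(U_{\tilde\delta}\cap W_k')}^2\le C\|f\|_{L^2(V\setminus U_{\tilde\delta})}^2+C\tilde\delta^2\,\|X_k f\|_{L^2(M)}^2.
\]
Summing over the finitely many $k$, bounding $\|X_k f\|_{L^2(M)}^2\le \mathbf{E}(f)$, and choosing $\tilde\delta^2\sim \epsilon$ yields the claim.

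The only step I expect to require genuine thought is the geometric observation above: a priori $A$ may be a rough, possibly fractal subset of the $C^1$ hypersurface $\{h_k=0\}$, so it is not intuitively obvious that raising $h_k$ along a flow line moves us away from $A$ \emph{itself}. The rescue is the one-line remark that $A$ is a \emph{subset} of the zero locus, whence the infimum of the distance function over $A$ is bounded below by the infimum over the larger set. Once that is secured, the remainder is the textbook Hardy estimate on a flow tube.
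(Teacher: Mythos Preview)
Your argument is correct and follows essentially the same approach as the paper's proof: at each point of $A$ pick a transversal $X_j$ via a $C^1$ function vanishing on $A$, cover $A$ by finitely many flow tubes, and run the fundamental theorem of calculus along the flow together with Cauchy--Schwarz to produce the $\epsilon$-small energy term and the $L^2$ term on the complementary shell. The only packaging difference is that the paper builds explicit flow coordinates $(y',y_n)$ off the hypersurface $\{h_k=0\}$ and takes $U_\epsilon$ to be the coordinate slab $\{|y_n|<\epsilon\}$, whereas you use the metric tube $\{d(\cdot,A)<\tilde\delta\}$ and your inequality $d(q,A)\ge |h_k(q)|/L$ to certify exit from the tube; both routes are equivalent and equally short.
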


Let us first see how to derive Theorem \ref{thm:sufficient} from the lemma. 

\begin{proof}[Proof of Theorem \ref{thm:sufficient}]
	Let $\epsilon>0$. We claim that \emph{there exists an open neighborhood $U_\epsilon$ of $Z$ and a constant $C_1$ independent of $\epsilon$ such that}  \begin{equation}\label{eq:key_lemma_claim}
		\lVert f\rVert^2_{L^2(U_\epsilon)}\leq \epsilon\mathbf{E}(f)+C_1\lVert f\rVert^2_{L^2(M\setminus U_\epsilon)}\qquad \forall f\in C^\infty(M). 
	\end{equation}
We first derive the thesis from the claim and then turn to its proof. All the estimates below are valid for every $f\in C^\infty(M)$. \newline

Fix $\epsilon>0$. Let $\zeta_\epsilon$ be a test function supported on $\Omega=M\setminus Z$ and identically one on $M\setminus U_\epsilon$, where $U_\epsilon$ is as in the claim. By a standard interpolation inequality for Sobolev norms,  \begin{equation}\label{eq:interpolation}
\lVert f\rVert^2_{L^2(M\setminus U_\epsilon)}	\leq \delta \lVert \zeta_\epsilon f\rVert_{W^s}^2+C_2(\delta, \epsilon)\lVert f\rVert_{W^{-1}}^2,
\end{equation} where $\delta>0$ is arbitrary and $s=s(\epsilon)>0$ is chosen so that we have the subelliptic estimate \begin{equation}\label{eq:subelliptic_2}
	\lVert \zeta_\epsilon f\rVert_{W^s}^2\leq C_3(\epsilon)\left(\mathbf{E}(f)+\lVert f\rVert_{W^{-1}}^2\right). 
	\end{equation}
This is a localized version of \eqref{eq:subelliptic}, which holds because the vector fields $X_1,\ldots, X_r$ satisfy the Hörmander condition on a neighborhood of the support of $\zeta_\epsilon$. Since $s$ depends on the length of the iterated commutators needed to span the tangent space on the region of interest, it ultimately depends on $\epsilon$. Putting together \eqref{eq:key_lemma_claim}, \eqref{eq:interpolation} and \eqref{eq:subelliptic_2}, we get \[
\lVert f\rVert_{L^2(M)}^2\leq (\epsilon+C_4(\epsilon)\delta)\mathbf{E}(f)+C_5(\delta, \epsilon)\lVert f\rVert_{W^{-1}}^2.
\]
Choosing $\delta$ sufficiently small, we finally get \[
\lVert f\rVert_{L^2(M)}^2\leq 2\epsilon\mathbf{E}(f)+C_6(\epsilon)\lVert f\rVert_{W^{-1}}^2.
\]
Since $\epsilon>0$ is arbitrary, this readily implies the desired precompactness. See, e.g., Lemma 1.1 of \cite{kohn_nirenberg}. \newline 

We are left with the proof of the claimed inequality \eqref{eq:key_lemma_claim}. We are going to prove, by backward induction on $\ell$, the following statement. \emph{Let $\epsilon>0$. Then there exists an open neighborhood $U_{\ell, \epsilon}$ of $Z_\ell$ and a constant $B_\ell>0$ \emph{independent of $\epsilon>0$} such that} \begin{equation}\label{key_lemma_claim_ell}
		\lVert f\rVert^2_{L^2(U_{\ell, \epsilon})}\leq \epsilon\mathbf{E}(f)+B_\ell\lVert f\rVert^2_{L^2(M\setminus U_{\ell, \epsilon})}. 
	\end{equation}
	The statement is trivial for $\ell=k$ and coincides with \eqref{eq:key_lemma_claim} for $\ell=0$. Assume that \eqref{key_lemma_claim_ell} holds for $\ell+1$. Let $V$ be an open neighborhood of $Z_{\ell+1}$ such that $\overline{V}\subseteq U_{\ell+1, \epsilon}$. Since $Z_\ell\setminus U_{\ell+1, \epsilon}$ is compact and has no characteristic points, Lemma \ref{lem:flow} gives a neighborhood $W\subseteq M\setminus \overline{V}$ of $Z_\ell\setminus U_{\ell+1, \epsilon}$ and a constant $C$ (independent of $\epsilon$) such that \[
	\lVert f\rVert_{L^2(W)}^2\leq \epsilon \mathbf{E}(f)+C\lVert f\rVert^2_{L^2(M\setminus \left(V\cup W\right))}. 
	\]
	This, together with the inductive assumption \[
	\lVert f\rVert^2_{L^2(U_{\ell+1,\epsilon})}\leq \epsilon\mathbf{E}(f)+B_{\ell+1}\lVert f\rVert^2_{L^2(M\setminus U_{\ell+1,\epsilon})}, 
	\]
	gives \eqref{key_lemma_claim_ell} with $U_{\ell,\epsilon}:= V\cup W$ (modulo a reparametrization in $\epsilon$), as we wanted. 
\end{proof}

\begin{proof}[Proof of Lemma \ref{lem:flow}]

Fix $p\in A$. By assumption, for some $j$ the vector $X_j(p)$ is not $C^1$-Zariski tangent to $A$. This means that there exists a $C^1$ function $u$ that vanishes on $A$ and such that $X_ju(p)\neq 0$. Thus, there is a neighborhood $U$ of $p$ such that $S=U\cap \{u=0\}$ is a $C^1$ hypersurface such that $U\cap A\subseteq S$. 

Let $\Phi(y_1,\ldots, y_{n-1})$ be a local parametrization of $S$ near $p=\Phi(0)$. Since the vector field $X_j$ is transversal to $S$ at $p$, the mapping \[
\Psi:(y_1,\ldots, y_{n-1}, y_n)\longmapsto  \mathrm{exp}_{y_nX}(\Phi(y_1,\ldots, y_{n-1}))
\] is a $C^1$-diffeomorphism of a neighborhood of $0$ in $\R^n$ onto an open neighborhood of $p$ in $M$. (We are denoting by $\exp_{tX}$ the flow at time $t$ of the vector field $X$.) We take the neighborhood in $\R^n$ to be of the form $D\times (-\delta, \delta)$, where $D$ is a bounded neighborhood of $0$ in $\R^{n-1}$ and $\delta>0$. We may assume that $\Psi$ is a diffeomorphism on a larger open set, so that the modulus of the Jacobian of $\Psi$ is bounded away from zero and infinity on $D\times (-\delta, \delta)$. Let $V(\epsilon)=\Psi\left(D\times (-\epsilon,\epsilon)\right)$, where $\epsilon\in (0, \delta)$. Finally, we may also assume that $\overline{V(\delta)}\subseteq V$, the neighborhood appearing in the statement, and $\overline{V(\delta)\setminus V(\epsilon)}$ is disjoint from $A$ for every $\epsilon>0$.  

All of the above works for every fixed $p\in A$. Since $A$ is compact, there are finitely many points $p_1,\ldots, p_N$ and $\delta>0$ with the property that $\bigcup_{k=1}^N V_k(\epsilon)$ contains $A$ for every $\epsilon\in (0,\delta)$, where $V_k(\delta)$ is the open set described in the previous paragraph relative to the point $p_k$. We use in a similar way the symbols $X_{j(k)}$, $D_k$ and $\Psi_k$. Let $f\in C^\infty(M)$. By construction, we have the following chain of inequalities, where $I\lesssim J$ means that $I\leq CJ$ with $C$ independent of $\epsilon\in (0,\delta/3)$:
\begin{eqnarray*}
	&&\lVert f\rVert_{L^2(V_k(\epsilon))}^2 \\
	&\lesssim& \int_{D_k\times (-\epsilon,\epsilon)} |f\circ\Psi_j(y',y_n)|^2\, dy'\, dy_n\qquad (y'=(y_1,\ldots, y_{n-1}))\\
	&\lesssim & \int_{D_k\times (-\epsilon,\epsilon)} \left(|f\circ\Psi_k(y',y_n+2\epsilon)|^2 + \epsilon \int_0^{2\epsilon} |X_{j(k)}f\circ \Psi_k(y',y_n+t)|^2\, dt\right)\, dy'\, dy_n\\
	&\lesssim & \int_{D_k\times (\epsilon,3\epsilon)} |f\circ\Psi_k(y',y_n)|^2 \, dy'\, dy_n + \epsilon^2 \int_{D_k\times (-\epsilon,3\epsilon)}  |X_{j(k)}f\circ \Psi_k(y',y_n+t)|^2\, dy'\, dy_n\\
	&\lesssim & \lVert f\rVert^2_{L^2(V_k(3\epsilon)\setminus V_k(\epsilon))} + \epsilon^2 \lVert X_{j(k)}f\rVert_{L^2(V_k(3\epsilon))}^2\\
	&\leq& \lVert f\rVert^2_{L^2(V_k(3\epsilon)\setminus V_k(\epsilon))} +\epsilon^2\mathbf{E}(f).
\end{eqnarray*}
Summing over $k$ we get \[
\lVert f\rVert_{L^2(\bigcup_{k=1}^NV_k(\epsilon))}^2\lesssim \lVert f\rVert^2_{L^2(\bigcup_{k=1}^NV_k(3\epsilon)\setminus V_k(\epsilon))} + \epsilon^2\mathbf{E}(f).
\]
Notice that the closure $L(\epsilon)$ of $\bigcup_{k=1}^NV_k(3\epsilon)\setminus V_k(\epsilon)$ is compact, contained in $V$, and disjoint from $A$. Setting $U_\epsilon=\bigcup_{k=1}^N V_k(\epsilon)\setminus L(\epsilon)$, we obtain the desired inequality (modulo a reparametrization in $\epsilon$). 

\end{proof}

\section{Characteristic sets in the o-minimal setting}\label{sec:proof_tame_1}

In the last three sections of the paper we prove Theorem \ref{thm:tame}. Fix an o-minimal structure. Let $X_1,\ldots, X_r:\R^n\rightarrow \R^n$ be definable vector fields (not necessarily continuous). If $A\subseteq \R^n$, the set $\mathrm{char}(A)$ of characteristic points of $A$ is well-defined. In fact, no regularity of the vector fields is required for Definition \ref{dfn:char} to make sense (in this generality, the set of characteristic points of a closed set need not be closed). The goal of this section is to prove the following lemma, reassuring us that by taking characteristic points we never leave the tame category. 

\begin{lem}\label{lem:def_char}
	If $A$ is definable, then $\mathrm{char}(A)$ is definable. 	
\end{lem}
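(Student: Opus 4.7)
Since each $X_j$ is a definable map, and $\mathrm{char}(A) = \{p \in A : X_j(p) \in \tau_A(p) \text{ for all } j\}$ where $\tau_A(p) \subseteq \R^n$ denotes the $C^1$-Zariski tangent space to $A$ at $p$, it is enough to show that the incidence set $T := \{(p, v) \in A \times \R^n : v \in \tau_A(p)\}$ is definable. The plan is to identify $\tau_A(p)$ with the linear span of the Peano tangent cone $C_p(A) := \{v \in \R^n : \exists (x_k) \subset A,\, \exists (t_k) \subset (0,\infty),\, x_k \to p \text{ and } t_k(x_k - p) \to v\}$. Given this identification, definability of $T$ follows at once, since $v \in C_p(A)$ is expressed by the first-order formula $\forall \varepsilon > 0\,\exists x \in A \setminus \{p\}\,\exists t > 0 : |x - p| + |t(x - p) - v| < \varepsilon$, and membership in the linear span of a definable set is first-order via existential quantification over at most $n$ vectors and scalars.

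The inclusion $\mathrm{span}(C_p(A)) \subseteq \tau_A(p)$ is valid for an arbitrary subset of $\R^n$ with no o-minimality required: if $v = \lim_k t_k(x_k - p)$ and $f \in C^1(\R^n)$ satisfies $f|_A = 0$, the $C^1$-Taylor expansion at $p$ gives $0 = f(x_k) - f(p) = df_p(x_k - p) + o(|x_k - p|)$; multiplying by $t_k$ and letting $k \to \infty$ yields $df_p(v) = 0$, and linearity in $v$ extends the conclusion from $C_p(A)$ to its span.

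The reverse inclusion is the technical heart and uses o-minimality in an essential way. Given $w \notin \mathrm{span}(C_p(A))$, one must construct $f \in C^1(\R^n)$ with $f|_A = 0$ and $df_p(w) \neq 0$. Take $p = 0$ and $V := \mathrm{span}(C_0(A))$; after replacing $w$ by its nonzero projection onto $V^\perp$, one may assume $w \in V^\perp$. A subsequence-extraction argument yields the horn-tangency estimate $|\pi_{V^\perp}(x)| = o(|x|)$ as $x \to 0$ in $A$ (otherwise a limit of normalized vectors $x_k/|x_k|$ would lie in $C_0(A) \setminus V$). The linear function $\tilde f(x) := \langle x, w\rangle$ then satisfies $d\tilde f_0(w) = |w|^2 \neq 0$ and $\tilde f|_A = o(|x|)$ near $0$. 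It remains to produce a $C^1$ function $g$ on a neighborhood $U$ of $0$ with $g|_{A \cap U} = \tilde f|_{A \cap U}$, $g(0) = 0$, and $dg_0 = 0$; then $f := \chi \cdot (\tilde f - g)$, with a bump $\chi$ supported in $U$ and equal to $1$ near $0$, is the desired function. The principal obstacle is exactly the construction of $g$: I would carry it out via definable $C^1$ cell decomposition of $A$ near $0$, realizing each cell abutting $0$ as a $C^1$ graph over $V$ (with vanishing first-order data at $0$, by the horn bound and the definable implicit function theorem), and then assembling $g$ by a definable gluing argument or by invoking a definable $C^1$ Whitney-type extension theorem. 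This is the step where tameness enters in an irreducible way.
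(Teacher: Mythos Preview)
Your reduction to the definability of the incidence set $T=\{(p,v):v\in\tau_A(p)\}$ is correct, and this is also how the paper proceeds. The gap is in the identification $\tau_A(p)=\mathrm{span}(C_p(A))$: this fails already for semialgebraic sets, so the reverse inclusion you set out to prove is simply false, and no amount of o-minimal Whitney machinery will rescue it.

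Take $A=\{(t,0):t\in\R\}\cup\{(t,t^3):t\in\R\}\subseteq\R^2$ and $p=0$. Both branches have tangent direction $(1,0)$ at the origin, so $C_0(A)$ is the $x$-axis and $\mathrm{span}(C_0(A))=\R\times\{0\}$. On the other hand, if $f\in C^1(\R^2)$ vanishes on $A$, then $f(t,0)=f(t,t^3)=0$ and the mean value theorem gives $\partial_y f(t,\eta(t))=0$ for some $\eta(t)\in(0,t^3)$; letting $t\to 0$ and using continuity of $\partial_y f$ yields $\partial_y f(0)=0$, whence $\tau_A(0)=\R^2$. Your horn estimate $|\pi_{V^\perp}(x)|=o(|x|)$ does hold here, but the function $g$ you ask for cannot exist: it would have to satisfy $g(t,0)=0$, $g(t,t^3)=t^3$, $dg_0=0$, and the same mean value argument forces $\partial_y g(t,\eta(t))=1$ along a sequence tending to $0$, contradicting $\partial_y g(0)=0$. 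So the ``$C^1$ Whitney-type extension'' you invoke is being asked to produce an extension that does not exist.

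What is missing is precisely the phenomenon that the ordinary tangent cone ignores secants between \emph{pairs} of nearby points of $A$. The paper handles this by passing to the paratangent cone $\mathrm{ptg}(A)$ and then invoking Glaeser's theorem, which says that $TA$ is obtained from the fiberwise linear span of $\mathrm{ptg}(A)$ by iterating the operation ``close, then take fiberwise span'' $2n$ times. Each of these operations is first-order definable, so definability of $TA$ follows. Note that the paper's argument actually works in any expansion of the ordered real field, not just o-minimal ones; your proposed route, even if patched, would genuinely need tameness.
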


Its proof is in turn based on a lemma expressing the definability of the $C^1$-Zariski tangent distribution to a definable set. If $A\subseteq \R^n$ is a set, then we denote by $T_xA$ the set of its $C^1$-Zariski tangent vectors. 

\begin{lem}\label{lem:def_tangent} If $A\subseteq \R^n$ is closed and definable, then the set \[
	TA = \{(x,v)\in \R^n \colon\, x\in A, \, v\in T_xA\}
	\] 
	is definable. 
	\end{lem}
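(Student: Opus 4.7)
My plan is to replace the non-first-order condition defining $T_xA$ by a first-order definable surrogate and then verify the two agree. The natural candidate is the linear span of the Bouligand contingent cone,
\[
L_xA := \mathrm{span}\,\bigl\{v \in \R^n : \forall \varepsilon>0,\, \forall \delta>0,\, \exists y \in A,\, \exists t>0,\ |y-x|<\delta\ \text{and}\ |y-x-tv|<\varepsilon t\bigr\}.
\]
The displayed condition is a first-order formula in the o-minimal language (distances, inequalities, existential quantifiers over $\R$), and the linear span of a definable subset of $\R^n$ is again definable (quantify over $n$-tuples of generators). Hence $\{(x,v) : x \in A,\ v \in L_xA\}$ is a definable subset of $\R^{2n}$, and it suffices to prove $T_xA = L_xA$ for every $x \in A$.

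The easy inclusion $L_xA \subseteq T_xA$ holds for any closed set $A$, with no tameness needed. Given $v$ realized by sequences $y_k \to x$ in $A$, $t_k \to 0^+$, $(y_k-x)/t_k \to v$, and any $C^1$ function $f$ with $f|_A = 0$, a Taylor expansion at $x$ yields
\[
0 = f(y_k) = df_x(y_k-x) + o(|y_k-x|);
\]
dividing by $t_k$ and passing to the limit gives $df_x(v) = 0$. Because $T_xA$ is a linear subspace, the entire span is annihilated.

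The reverse inclusion $T_xA \subseteq L_xA$ is the substantive part, where the definability of $A$ is crucial. Suppose $v \notin L_xA$ and choose a linear functional $\ell$ on $\R^n$ with $\ell|_{L_xA}=0$ and $\ell(v)=1$. I want to produce a $C^1$ function $f\colon \R^n\to\R$ with $f|_A = 0$ and $df_x = \ell$. The hypothesis $\ell|_{L_xA}=0$ translates exactly into the pointwise statement $\ell(y-x)=o(|y-x|)$ as $y\to x$ along $A$; in the o-minimal setting this upgrades, via a Łojasiewicz-type estimate (a consequence of the curve selection lemma and definable choice), to a \emph{uniform} rate of decay. This uniform rate is the hinge that allows a Whitney-type extension of the prescribed $1$-jet $(0,\ell)$ at $x$ together with the values $y \mapsto \ell(y-x)$ on $A\setminus\{x\}$ (with derivative $\ell$). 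To perform the extension I would apply a $C^1$ cell decomposition of $A$ in a neighborhood of $x$, extend $\ell$ cell by cell with controlled derivatives, and patch via a definable partition of unity.

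The main obstacle is precisely this $C^1$ extension: for a general closed set the statement $T_xA \subseteq L_xA$ fails, and it is tameness that rescues it, through the promotion of pointwise control along the Bouligand cone to a uniform Łojasiewicz-type rate. Verifying Whitney's compatibility condition on pairs of points in $A$, and ensuring that the gluing across cell boundaries preserves $C^1$ regularity, is where the bulk of the technical work sits; everything else in the plan is routine o-minimal bookkeeping.
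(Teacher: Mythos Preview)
Your proposal has a genuine gap: the claimed identity $T_xA = L_xA$ (span of the contingent cone) is \emph{false}, already for very simple semi-algebraic sets. Take
\[
A=\{(t,t^3):t\in\R\}\cup\{(t,-t^3):t\in\R\}\subset\R^2,
\]
the zero set of $(y-x^3)(y+x^3)$. Along either branch one has $|y|=|x|^3$, so every rescaled secant from the origin converges to a horizontal vector; hence the contingent cone at $0$ is the $x$-axis and $L_0A$ is one-dimensional. On the other hand, if $f\in C^1(\R^2)$ vanishes on $A$, then $f(t,t^3)=f(t,-t^3)=0$ for all $t$, and the mean value theorem in the $y$-variable gives $\partial_y f(t,\xi_t)=0$ for some $\xi_t\in(-|t|^3,|t|^3)$; letting $t\to0$ and using continuity of $\nabla f$ yields $\partial_y f(0,0)=0$. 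Differentiating $f(t,t^3)=0$ at $t=0$ gives $\partial_x f(0,0)=0$ as well. Thus $df_0=0$ for every admissible $f$, so $T_0A=\R^2\supsetneq L_0A$.

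This example also pinpoints exactly where your Whitney-extension plan breaks. With $\ell(a,b)=b$ one has $\ell|_{L_0A}=0$ and indeed $\ell(y)=o(|y|)$ along $A$; but the compatibility condition you flag as ``the bulk of the technical work'' fails for \emph{pairs} of points: with $p=(t,t^3)$ and $q=(t,-t^3)$ one gets $|\ell(p-q)|/|p-q|=1$, not $o(1)$. No Łojasiewicz estimate can repair this, because the obstruction is not a rate-of-decay issue but the fact that the contingent cone ignores secants between two moving points of $A$. The correct first-order surrogate must start from the \emph{paratangent} cone (limits of $(p_j-q_j)/|p_j-q_j|$ with $p_j,q_j\to x$ in $A$), and even then one must iterate the operation ``take closure in $\R^{2n}$, then fiberwise linear span'' up to $2n$ times before stabilizing at $TA$. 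This is precisely Glaeser's theorem, which the paper invokes: since the paratangent cone of a definable set is definable, and closure and fiberwise span preserve definability, the formula $TA=\Lambda^{2n}(\widetilde{\mathrm{ptg}(A)})$ immediately yields the lemma. Note that this route does not even use axiom~(5); it works in any expansion of the real ordered field.
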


Our proof of Lemma \ref{lem:def_tangent} does not rely on o-minimality and works on every expansion of the ordered field of real numbers (i.e., for any structure satisfying axioms (1)-(4) in Section \ref{sec:o_minimal}), a fact that may be of independent interest. A simpler proof exploiting o-minimality may well exist. 

Deriving Lemma \ref{lem:def_char} from Lemma \ref{lem:def_tangent} is an easy task. 

\begin{proof}[Proof of Lemma \ref{lem:def_char}] 
	Since $A\cap \mathrm{char}(\overline{A})=\mathrm{char}(A)$, we may assume without loss of generality that $A$ is closed. Letting \[
	E_j = \{(x,X_j(x))\in \R^{2n}\colon\, x\in A\}\qquad (j=1,\ldots, r),  
	\]
	we have \[
	\mathrm{char}(A) = \cap_{j=1}^r\pi\left(TA\cap E_j\right), 
	\]
	where $\pi:\R^{2n}\rightarrow \R^n$ is the projection onto the first $n$ coordinates. Each $E_j$ is definable, because the vector fields are definable, and $TA$ is definable by Lemma \ref{lem:def_tangent}. Since $\mathrm{char}(A)$ is built out of $TA$ and $E_1,\ldots, E_r$ via boolean operations and linear projections, we get the thesis.
	\end{proof}

The proof of Lemma \ref{lem:def_tangent} is more complicated. To understand why, notice that unraveling the definition of Zariski tangency, we see that $TA$ is defined by the formula (in $(x,v)\in \R^{2n}$) \[
\forall f\in C^1(\R^n) \left((\forall x \in A\, f(x)=0) \quad \Longrightarrow \quad \sum_{k=1}^n\partial_{x_k}f(x) v_k=0\right).
\]
This is not a first order formula in the language of our structure, and hence it is useless for settling the definability of $TA$. We will overcome this difficulty by finding a first-order description of $TA$. In order to do this, we exploit some machinery from the theory surrounding the Whitney extension problem. \newline 

We need to describe a couple of operations that can be performed on subsets of $\R^n\times \R^n$. We think of a subset $B\subseteq \R^n\times \R^n$ as a fibered collection of subsets of $\R^n$: for each $x\in \R^n$ we denote by $B_x$ the \emph{fiber of $B$ over $x$}, that is, the set \[
B_x:=\{v\in\R^n\colon\, (x,v)\in \R^n\}. 
\]

The first operation is \emph{fiberwise linearization}. Given $B\subseteq \R^n\times \R^n$, it produces the new set $\widetilde{B}$ whose fiber $\widetilde{B}_x$ is the linear span of the original fiber $B_x$. By elementary linear algebra, $\widetilde{B}$ is defined by the first order formula in $(x,v)$ \[
\exists \lambda_1,\ldots, \lambda_n\in \R,\, \exists v_1,\ldots, v_n\in \R^n\colon\quad (x,v_1), \ldots, (x,v_n)\in B \, \text{and}\, v=\sum_{k=1}^n\lambda_kv_k.
\]
Thus, if $B$ is definable, the same is true of its fiberwise linearization $\widetilde{B}$. 

The second operation is simply topological \emph{closure} as a subset of $\R^n\times \R^n$. We denote it, as usual, by $\overline{B}$. The closure of a definable set is definable, so this second operation preserves definability too. 

It is convenient to introduce the symbol $\Lambda(B)$ for the result of the operation of closure followed by fiberwise linearization, i.e., $\Lambda(B):=\widetilde{\overline{B}}$. \newline 

Next, we need to recall the notion of \emph{paratangent cone} of a set $A\subseteq \R^n$. This is a subset $\mathrm{ptg}(A)\subseteq \R^n\times \R^n$, whose fiber over $x$ consists of vectors $v$ that are limits of secants of $A$, that is, $v\in \mathrm{ptg}(A)_x$ if and only if there exist $\sigma\in \R$ and sequences $\{p_j\}_j$, $\{q_j\}_j$ in $A$ such that: \begin{enumerate} 
	\item $\lim_{j\rightarrow +\infty}p_j=\lim_{j\rightarrow +\infty}q_j=x$, 
	\item $p_j\neq q_j$ for every $j$, 
	\item $v=\sigma \lim_{j\rightarrow +\infty} \frac{p_j-q_j}{|p_j-q_j|}$. 
\end{enumerate}
Replacing the limits with an $\epsilon-\delta$ definition and observing that $\frac{x}{|x|}$ is a semialgebraic function, we see that if the set $A$ is definable, then $\mathrm{ptg}(A)$ is also definable. \newline 

We can finally formulate the following deep theorem of Glaeser. See \cite[Chapter 2, Proposition VII]{glaeser} or \cite[Theorem 4.1(3)]{tierno}. 

\begin{thm}
Let $A\subseteq \R^n$ be an arbitrary closed set. Then \[
TA = \underbrace{\Lambda\circ \cdots \circ \Lambda}_{\textrm{$2n$ times}} (\widetilde{\mathrm{ptg}(A)}). 
\] 	
	\end{thm}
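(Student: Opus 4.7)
The plan is to prove the two inclusions separately. For the easy direction $\Lambda^{2n}(\widetilde{\mathrm{ptg}(A)}) \subseteq TA$, I would verify that $TA$ is stable under the operations composing $\Lambda$ and already contains $\widetilde{\mathrm{ptg}(A)}$. First, $\widetilde{\mathrm{ptg}(A)} \subseteq TA$: if $(x,v)\in\mathrm{ptg}(A)$ with $v = \sigma \lim (p_j - q_j)/|p_j - q_j|$ and $f \in C^1(\R^n)$ vanishes on $A$, then $0 = f(p_j) - f(q_j) = \int_0^1 \nabla f(q_j + t(p_j - q_j))\cdot (p_j - q_j)\, dt$ yields $df_x(v) = 0$ after normalizing and passing to the limit; linearity of $T_xA$ absorbs the tilde. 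Second, $TA$ has linear fibers by definition. Third, $TA$ is closed in $\R^n \times \R^n$: if $(x_j, v_j) \to (x, v)$ with $v_j \in T_{x_j}A$, then closedness of $A$ gives $x \in A$ and continuity of $\nabla f$ for every $f$ vanishing on $A$ gives $df_x(v) = \lim df_{x_j}(v_j) = 0$. Combining the three, $\Lambda(TA) = TA$, so by induction every iterate is contained in $TA$.

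The reverse inclusion is the substantive content of Glaeser's theorem, and I would approach it through a dual Whitney-extension argument. Write $B^{(k)} := \Lambda^{(k)}(\widetilde{\mathrm{ptg}(A)})$ and let $B^{(\infty)}$ denote the stabilized bundle. The strategy is to show that the fiberwise orthogonal complements $(B^{(\infty)}_x)^\perp \subseteq \R^n$ are precisely realizable as gradients at $x$ of $C^1$ functions vanishing identically on $A$. Given $x_0 \in A$ and $v \notin B^{(\infty)}_{x_0}$, pick $\xi_0 \perp B^{(\infty)}_{x_0}$ with $\xi_0 \cdot v \neq 0$. If one can produce $f \in C^1(\R^n)$ with $f|_A \equiv 0$ and $\nabla f(x_0) = \xi_0$, then $df_{x_0}(v) \neq 0$ and we conclude $v \notin T_{x_0}A$. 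Such an $f$ is produced by prescribing a $1$-jet on $A$ of the form $y \mapsto (0,\, \xi(y))$, where $\xi : A \to \R^n$ is a continuous selection of $y \mapsto (B^{(\infty)}_y)^\perp$ with $\xi(x_0) = \xi_0$, and then invoking Whitney's $C^1$ extension theorem. The existence of such a selection, together with the Whitney compatibility $|\xi(y_2)\cdot(y_1 - y_2)| = o(|y_1 - y_2|)$ as $y_1, y_2 \to y$ in $A$, is precisely what $\Lambda$-stabilization secures.

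The $2n$ bound is a finite stabilization statement for the descending chain $B^{(0)} \supseteq B^{(1)} \supseteq \cdots$ (once one checks monotonicity, which holds from the first iterate onward because each $B^{(k)}$ has closed linear fibers by construction). A dimension-budget argument drives the bound: the fiber dimension takes values in $\{0, 1, \ldots, n\}$, and the base locus on which closure actually modifies fibers is strictly lower-dimensional after an appropriate stratification. Iterating consumes at most $n$ jumps in fiber dimension together with $n$ drops in base dimension of the ``locus of change'', giving $2n$ as the ceiling. This is in the same spirit as the finite stabilization Lemma \ref{lem:finite_termination} used later in the paper, though here no tameness hypothesis is needed because $\R^n$ already provides the ambient dimension framework.

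The main obstacle is the construction of the Whitney-compatible selection in the second step. The heart of Glaeser's argument is that any obstruction to such a compatible $\xi$ is detected by a finite iterate of $\Lambda$ applied to $\mathrm{ptg}(A)$; equivalently, the $\Lambda$-stabilized bundle is exactly the \emph{paratangent bundle of Glaeser-tangent type}. Making this rigorous requires either the original analysis of Glaeser refinements in \cite{glaeser} or the more modern presentation of \cite{tierno}, and this is where the truly deep content lies.
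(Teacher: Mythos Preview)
The paper does not prove this theorem: it is quoted as a deep result of Glaeser, with references to \cite{glaeser} and \cite{tierno}, and then used as a black box in the proof of Lemma~\ref{lem:def_tangent}. There is therefore no ``paper's own proof'' to compare your proposal against.

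Your outline of the easy inclusion $\Lambda^{2n}(\widetilde{\mathrm{ptg}(A)}) \subseteq TA$ is correct and essentially complete. For the reverse inclusion and for the $2n$ bound you explicitly defer the substantive content to \cite{glaeser} and \cite{tierno}, so in the end your argument, like the paper's treatment, is a citation dressed with expository remarks. That is perfectly acceptable here, but it should not be mistaken for an independent proof.

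One concrete slip: the chain $B^{(0)}, B^{(1)}, \ldots$ is \emph{ascending}, not descending. For any $B\subseteq\R^n\times\R^n$ one has $B\subseteq\overline{B}\subseteq\widetilde{\overline{B}}=\Lambda(B)$, so $B^{(k)}\subseteq B^{(k+1)}$ for every $k\geq 0$; your inclusions $B^{(0)}\supseteq B^{(1)}\supseteq\cdots$ are reversed. Consequently your dimension-budget heuristic for the $2n$ bound should be recast in terms of fiber dimensions that can only \emph{increase} under iteration, with the stabilization occurring once no further growth is possible. The analogy you draw with Lemma~\ref{lem:finite_termination} is suggestive but loose: that lemma concerns a genuinely descending chain of sets and relies on o-minimality, whereas Glaeser's bound is a purely linear-algebraic and topological fact valid for arbitrary closed subsets of $\R^n$.
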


\begin{proof}[Proof of Lemma \ref{lem:def_tangent}]
By Glaeser's theorem, $TA$ is obtained by applying to $A$ finitely many operations, each of which preserves definability. 
	\end{proof}

\section{Finite stabilization lemma}\label{sec:proof_tame_2}

We continue with the setting of Section \ref{sec:proof_tame_1}, that is, $X_1,\ldots, X_r:\R^n\rightarrow \R^n$ are definable vector fields (not necessarily continuous), and characteristic sets are defined with respect to them. In this section we prove the following crucial result. 

\begin{lem}[Finite stabilization]\label{lem:finite_termination} Let $E\subseteq \R^n$ be closed and definable and let \[
	E_0\supseteq E_1\supseteq\ldots\supseteq E_k\supseteq \ldots
	\]
	be the corresponding chain of iterated characteristic sets, that is, $E_0:=E$ and $E_{k+1}:=\mathrm{char}(E_k)$ for every $k\geq 0$. Then there exists $k\leq n+1$ such that $E_k=E_{k+1}=\ldots$. 
\end{lem}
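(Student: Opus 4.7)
The plan is to identify a single numerical invariant $\beta(E) \in \{-\infty, 0, 1, \ldots, n\}$ that strictly decreases at every non-trivial step of the chain. Concretely, I would set $\beta(E) := \dim(E \setminus \mathrm{char}(E))$ (with the convention $\dim \emptyset = -\infty$); note that $\beta(E) = -\infty$ is equivalent to $\mathrm{char}(E) = E$, i.e., to the chain having stabilized at $E$. By Lemma \ref{lem:def_char}, $E \setminus \mathrm{char}(E)$ is definable, so $\beta$ is well-defined. Since $\beta(E) \leq n$, a strict decrease at every step of strict inclusion forces stabilization within at most $n+1$ iterations, yielding the desired bound.

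The core estimate is: \emph{if $\mathrm{char}(E) \subsetneq E$, then $\beta(\mathrm{char}(E)) < \beta(E)$}. My approach would be to first establish the \emph{locality of the $C^1$-Zariski tangent}: if $A, B \subseteq \R^n$ satisfy $A \cap U = B \cap U$ for some open neighborhood $U$ of a point $x$, then $T_x A = T_x B$. This is a standard cutoff argument. Given a $C^1$ function $f$ with $f|_B \equiv 0$, pick a $C^\infty$-bump $\chi$ supported in $U$ and identically $1$ near $x$; the product $\chi f$ is $C^1$, vanishes on $A$ (outside $U$ because $\chi = 0$, inside $U$ because $A \cap U = B \cap U$ and $f$ vanishes on the latter), and agrees with $f$ in a neighborhood of $x$, so the constraint $df_x(v) = 0$ is enforced via $d(\chi f)_x(v) = 0$.

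Second, setting $E' := \mathrm{char}(E)$, I would use locality to show that any $x \in E'$ admitting a neighborhood $U$ disjoint from $E \setminus E'$ must lie in $\mathrm{char}(E')$: for such $x$ one has $U \cap E = U \cap E'$, whence $T_x E = T_x E'$; and $X_j(x) \in T_x E$ (because $x \in \mathrm{char}(E)$) then yields $X_j(x) \in T_x E'$. The contrapositive reads
\[
E' \setminus \mathrm{char}(E') \,\subseteq\, \overline{E \setminus E'} \cap E' \,=\, \overline{E \setminus E'} \setminus (E \setminus E'),
\]
where the equality uses $\overline{E \setminus E'} \subseteq E$ (because $E$ is closed). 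Now the o-minimal \emph{frontier dimension inequality} (van den Dries \cite[Chapter 4]{vandendries}) asserts $\dim(\overline{S} \setminus S) < \dim S$ for any nonempty definable $S$. Applied to the nonempty definable set $S = E \setminus E'$, this gives $\dim(E' \setminus \mathrm{char}(E')) < \dim(E \setminus E')$, i.e., $\beta(E') < \beta(E)$, as required.

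To my eyes the main conceptual step is the choice of the invariant $\beta$: once one thinks of measuring "how much gets removed" by $\dim(E \setminus \mathrm{char}(E))$ and remembers that $C^1$-Zariski tangency is purely local, everything reduces to a single invocation of the frontier dimension inequality---essentially the only o-minimal input needed. No induction on dimension or cell decomposition is required, and the bound $n+1$ drops out automatically from the range of $\beta$.
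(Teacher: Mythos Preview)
Your proof is correct and takes a genuinely different route from the paper's. The paper constructs, for each $k$, a stratification $\mathcal{P}_k$ of $E_k$ refining the previous ones, and then shows by induction on $k$ that every stratum of $\mathcal{P}_k$ of dimension at least $n-k+1$ lies in $E_j$ for all $j\geq k$; the case $k=n+1$ yields the conclusion. You bypass the explicit stratification entirely by tracking the single numerical invariant $\beta(E_k)=\dim(E_k\setminus E_{k+1})$ and invoking the frontier dimension inequality once per step. The two arguments share the same core locality fact---the paper's ``elementary observation'' that $A\subseteq\mathrm{char}(B)$ and $A$ open in $B$ force $A=\mathrm{char}(A)$ is precisely your Step~2---but your packaging is tighter: the o-minimal input is isolated into a single black box (the frontier inequality) rather than spread across an iterated stratification argument. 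One small remark: in the generality of the lemma the vector fields are merely definable, not continuous, so the iterates $E_k$ for $k\geq 1$ need not be closed and your displayed \emph{equality} $\overline{E\setminus E'}\cap E'=\overline{E\setminus E'}\setminus(E\setminus E')$ may fail at later steps; however, only the inclusion $\subseteq$ is needed (it follows from $E'\cap(E\setminus E')=\emptyset$ alone), so the argument goes through unchanged.
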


The proof of Lemma \ref{lem:finite_termination} is based on a basic stratification theorem for definable sets in o-minimal structures. In order to formulate it, we need to recall the notion of \emph{cell} from \cite[p.~50]{vandendries}. Cells are special definable subsets of $\R^n$, each having a \emph{dimension} $k\in \{0,1,\ldots, n\}$. They are defined recursively as follows: 
 \begin{enumerate}
\item a $0$-dimensional cell in $\R^1$ is a point, 
\item a $1$-dimensional cell in $\R^1$ is an open interval, 
\item if $C'$ is a $k$-dimensional cell in $\R^{n-1}$ (in this case $k<n$) and $f:C'\rightarrow \R$ is a continuous definable function, then \[
C=\{(x',f(x))\in \R^n\colon\, x'\in C'\}
\] is a $k$-dimensional cell in $\R^n$, 
\item if $C'$ is a $(k-1)$-dimensional cell in $\R^{n-1}$ and $f,g:C'\rightarrow \R$ are continuous definable functions, then \[
C=\{(x',y)\in \R^n\colon\, x'\in C', \, f(x')<y<g(x')\}
\] is a $k$-dimensional cell in $\R^n$, and the same holds if $f\equiv-\infty$ or $g\equiv+\infty$. 
	\end{enumerate}
For later purposes, we recall that the notion of \emph{$C^1$ cell} is defined analogously, replacing the continuity requirement on $f$ and $g$ with $C^1$ regularity (when these are real-valued). Notice that $k$-dimensional cells are topological submanifolds of dimension $k$, and $k$-dimensional $C^1$ cells are $C^1$ submanifolds of dimension $k$.\newline 

We adopt the terminology of \cite{vandendries} and call \emph{frontier} of a set $A\subseteq \R^n$ the set $\overline{A}\setminus A$. 
Let $A$ be a closed definable set. A \emph{stratification} of $A$ is a partition of $A$ into finitely many cells, called \emph{strata}, such that the frontier of each stratum is a union of lower-dimensional strata. The next theorem is Proposition 1.13 in \cite[p.68]{vandendries}. 

\begin{thm}[Stratification of closed definable sets]\label{thm:stratification}
	Let $A$ be a closed definable set and let $A_1,\ldots, A_m$ be definable subsets of $A$. Then there exists a stratification of $A$ partitioning each of $A_1,\ldots, A_m$ (i.e., each of $A_1,\ldots, A_m$ is a union of strata). 
\end{thm}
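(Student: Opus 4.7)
The plan is to reduce Theorem~\ref{thm:stratification} to the more basic cell decomposition theorem of o-minimal geometry, and then upgrade a cell decomposition into a stratification by arranging the frontier condition. First I would invoke the cell decomposition theorem (e.g., \cite[Ch.~3]{vandendries}): for every finite collection of definable subsets $B_1,\ldots, B_k\subseteq \R^n$, there exists a partition of $\R^n$ into finitely many cells such that each $B_i$ is a union of cells. This is the fundamental structure theorem in o-minimal geometry, proved by simultaneous induction on $n$ together with the monotonicity/piecewise continuity theorem, whose one-dimensional base case is essentially a reformulation of the o-minimality axiom. Applying it to the collection $\{A, A_1,\ldots, A_m\}$ yields a partition $\mathcal{D}_0$ of $\R^n$ into cells refining each of these sets; in particular, the cells of $\mathcal{D}_0$ contained in $A$ form a cell partition of $A$ refining each $A_j$.

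The remaining difficulty is the frontier condition: for a cell $C\in \mathcal{D}_0$, its frontier $\overline{C}\setminus C$ need not be a union of cells of $\mathcal{D}_0$. To fix this, I would enlarge the input: the frontier of any cell is definable (closures and set differences preserve definability), so the collection $\{A, A_1,\ldots, A_m\}\cup\{\overline{C}\setminus C:C\in\mathcal{D}_0\}$ is finite and definable. Re-applying cell decomposition produces a refinement $\mathcal{D}_1$ in which every old frontier is a union of cells. Iterating this refinement builds a descending chain $\mathcal{D}_0,\mathcal{D}_1,\ldots$, and the key point where termination is forced is the strict dimension drop $\dim(\overline{C}\setminus C)<\dim C$, valid for cells in any o-minimal structure (see \cite[Ch.~4]{vandendries}). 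Since dimensions range in $\{0,1,\ldots,n\}$ and each refinement step can only introduce finitely many new ``bad'' cells whose frontiers live in strictly smaller dimension, the process stabilizes after finitely many steps in a cell decomposition $\mathcal{D}$ satisfying the frontier condition globally on $\R^n$. Restricting $\mathcal{D}$ to the cells contained in $A$ gives a partition of $A$ into cells; because $A$ is closed, the frontier of every such cell stays inside $A$, and by construction it is a union of strictly lower-dimensional strata, as required.

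The main obstacle is really the cell decomposition theorem itself: its inductive proof, running in parallel with the monotonicity theorem and requiring the definable choice principle, is the place where o-minimality (axiom (5)) is genuinely exploited. Once cell decomposition and the strict dimension inequality for frontiers are available, upgrading to a stratification refining prescribed definable subsets is essentially a bookkeeping refinement argument. A secondary subtlety worth checking is that the refinement step does not destroy the property that $A$ is a union of cells; this is automatic since $A$ is one of the sets included in the input at every iteration.
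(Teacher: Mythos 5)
Note first that the paper does not prove Theorem \ref{thm:stratification}: it is quoted as Proposition 1.13 (p.~68) of \cite{vandendries}, so you are supplying an argument where the paper gives only a citation. Your first step (cell decomposition compatible with $A,A_1,\ldots,A_m$) is standard and fine. The genuine gap is the stabilization claim in the second step. The inequality $\dim(\overline{C}\setminus C)<\dim C$ bounds the dimension of each individual frontier, but it does not yield any quantity that strictly decreases along your iteration $\mathcal{D}_0,\mathcal{D}_1,\ldots$. Re-decomposing $\R^n$ compatibly with the enlarged family will in general subdivide cells of \emph{every} dimension (in van den Dries' cells, compatibility with even a single new point forces cuts propagating through the coordinate projections, slicing top-dimensional cells), so $\mathcal{D}_{i+1}$ contains brand-new cells of every dimension whose frontiers were not among the sets fed into step $i$; hence $\mathcal{D}_{i+1}$ need not satisfy the frontier condition, and nothing in your argument shows the process halts. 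Moreover, the premise that the new ``bad'' cells live in strictly smaller dimension is false: the frontier of a $d$-cell with $d<n$ can meet another $d$-cell of the same decomposition in a proper lower-dimensional subset. For instance, in $\R^3$ take the $2$-cell $C=\{(x,y,f(x,y)):(x,y)\in(0,1)^2\}$ with $f(x,y)=y/(x+y)$, and the $2$-cell $C'=\{0\}\times(0,1)\times(0,2)$; both occur in a single (even semialgebraic) decomposition, and $\overline{C}\setminus C$ meets $C'$ exactly in $\{0\}\times(0,1)\times\{1\}$. So accommodating frontiers forces re-cutting cells of the \emph{same} dimension, which is precisely what threatens termination.

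A correct upgrade from cell decomposition to the frontier condition requires more careful bookkeeping than ``refine until stable'': the standard arguments (including the one behind \cite[Prop.~1.13]{vandendries}) organize a descending induction on dimension in which strata, once accepted, are never subdivided again --- one fixes the current top-dimensional strata, throws their frontiers into the strictly lower-dimensional remainder, and stratifies that remainder, using that a union of strata stays a union of strata under further refinement of the lower-dimensional part, together with an argument (or the fibered structure of decompositions over $\R^{n-1}$) handling the phenomenon exhibited above. As written, your termination step is the missing idea, not a routine bookkeeping matter. A minor additional point: to guarantee that $\mathcal{D}_1$ refines $\mathcal{D}_0$ you should also include the cells of $\mathcal{D}_0$ themselves, not only their frontiers, among the sets the new decomposition must partition. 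Since the paper treats this theorem as a black box, the simplest repair is to do the same and cite \cite{vandendries} directly.
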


\begin{proof}[Proof of Lemma \ref{lem:finite_termination}] By Lemma \ref{lem:def_char}, every $E_j$ is definable. Applying iteratively Theorem \ref{thm:stratification}, we may find, for each $j\geq 0$, a stratification $\mathcal{P}_j$ of $E_j$ with the property that each stratum of $\mathcal{P}_{j+1}$ is contained in a stratum of $\mathcal{P}_j$.  
	
We claim that \emph{if $C$ is a cell of $\mathcal{P}_k$ of dimension at least $n-k+1$, then $C\subseteq E_j$ for every $j\geq k$}. The case $k=0$ is vacuously true and the case $k=n+1$ is the thesis. 
	
We are going to use the following elementary observation, which has nothing to do with o-minimality: \emph{if $A,B\subseteq \R^n$ are arbitrary subsets such that $A\subseteq \mathrm{char}(B)$ and $A$ is open in $B$, then $A=\mathrm{char}(A)$}. 

Let $m\geq 1$ and assume the validity of the claim for every $k<m$. Let $C$ be a stratum of $\mathcal{P}_m$ of dimension at least $n-m+1$, and let $C'$ be the stratum of $\mathcal{P}_{m-1}$ that contains it. 

If $C'$ is contained in the frontier of a stratum $C''$ of $\mathcal{P}_{m-1}$ of dimension at least $n-m+2$, then by the inductive hypothesis $C''\subseteq E_j$ for every $j\geq m-1$. Since $E_j$ is closed, $C\subseteq C'\subseteq E_j$ for every $j\geq m$ and we are done. 

If $C'$ is not contained in the closure of a stratum of dimension at least $n-m+2$ of $\mathcal{P}_{m-1}$, then it must have the same dimension as $C$ and be open in $E_{m-1}$. These two facts imply that $C$ itself is open in $E_{m-1}$. Since $C\subseteq E_m=\mathrm{char}(E_{m-1})$, by the elementary observation we have $C=\mathrm{char}(C)$, which in turn gives $C\subseteq E_j$ for every $j\geq m$, as we wanted. \end{proof}

\section{Proof of Theorem \ref{thm:tame}}\label{sec:proof_tame_3}

Putting together the results of the previous two sections, we may now give the proof of Theorem \ref{thm:tame}. More precisely, we prove the slight generalization discussed after the statement of Theorem \ref{thm:tame}. 

First of all, condition (1) implies condition (2) by Proposition \ref{prp:weierstrass}. 

Let $E$ be a definable set with empty interior that contains the degeneration locus. Since the closure of a definable set with empty interior has empty interior, we may assume without loss of generality that $E$ is closed. Because $E\supseteq Z$, condition (3') (see the remarks after the statement of the theorem) implies that $Z_k$ is empty for some $k<+\infty$, and hence (1) holds by Theorem \ref{thm:sufficient}.

We are left with the proof that (2) implies (3'). Assume (2). By Lemma \ref{lem:finite_termination}, $E_{n+1}=E_{n+2}=\ldots$ and we have to prove that $E_{n+1}$ is empty. We argue by contradiction, assuming it non-empty. Let $p\in E_{n+1}$ and let $(U,\phi)$ be a chart in a definable atlas such that $p\in U$ and  $\phi(U)=\R^n$. To achieve this, one may first restrict the domain of the chart so that $\phi(U)$ is a ball, and then compose $\phi$ with a semi-algebraic diffeomorphism of the ball onto $\R^n$. The notion of characteristic point is local (i.e., $\mathrm{char}(E)\cap U$ is the same as the set of characteristic points of $E\cap U$ viewed as a subset of the manifold $U$) and diffeomorphism invariant. We omit the straightforward proofs of this facts. Thus, $E':=\phi(E\cap U)$ is a closed definable subset of $\R^n$ such that the iterated characteristic set $E'_{n+1}$, computed with respect to the pushed-forward definable vector fields $\phi_*X_j$, equals $E'_{n+2}$ and is non-empty. By Theorem \ref{thm:stratification}, $E'_{n+1}$ admits a stratification consisting of at least one cell. Any top-dimensional cell is relatively open in $E'_{n+1}$. By the $C^1$ decomposition theorem (see \cite[p. 115]{vandendries}), we may further partition such a cell into $C^1$ cells, at least one of which must be of the same dimension, and hence also relatively open in $E'_{n+1}$. This proves that there is an open set $V\subseteq \R^n$ such that $N=V\cap E'_{n+1}$ is a $C^1$ submanifold. Since $E'_{n+1}=E'_{n+2}$, $N$ is characteristic with respect to the pushed-forward vector fields. Notice that $N$ has positive codimension, because $E$ has empty interior. The same is true of $\phi^{-1}(N)$. By Proposition \ref{prp:weierstrass}, the Dirichlet energy ball is not precompact. This is the desired contradiction. The proof is complete. 

\appendix
\section{Proof of Proposition \ref{prp:weierstrass}}\label{sec:weierstrass}

We work in local coordinates $(x_1,\ldots, x_n)$ centered at a point of $N$ and such that (a local piece of) $N$ is defined by the equations \[x_1=\ldots=x_s=0, \qquad s\geq 1. \]

If $X_j=\sum_{k=1}^n b_{jk}(x)\partial_{x_k}$ ($j=1,\ldots, r$), then the fact that $N$ is characteristic is expressed by the vanishing \[
b_{jk}(0,\ldots, 0, x_{s+1}, \ldots, x_n)=0\qquad \forall k=1,\ldots, s, \, \forall j.\]
Thus, there are smooth functions $b_{jk}^\ell$ ($k,\ell=1,\ldots, s$) such that
\begin{equation}\label{weierstrass_char}
b_{jk}(x) = \sum_{\ell=1}^s x_\ell b_{jk\ell}(x)\qquad \forall k=1,\ldots, s, \, \forall j. 
\end{equation}
Let $\eta$ be a test function supported near the origin of $\R^n$ and define the $L^2$-rescaled function \[
\eta_t(x)=t^{\frac{s}{2}}\eta(tx_1,\ldots, tx_s, x_{s+1}, \ldots, x_n), 
\]
which we view as a function on $M$, via the local coordinates, when $t$ is large. It is easy to see that $\lVert \eta_t\rVert_{L^2(M)}^2\sim_{t\rightarrow +\infty}\int \eta(x)^2\rho(0,\ldots, 0, x_{s+1}, \ldots, x_n)\, dx_1\ldots dx_n$, where $\rho$ is the density of the volume form in local coordinates. We claim that 
\begin{equation}\label{weierstrass_dirichlet}
	\mathbf{E}(\eta_t)=O_{t\rightarrow +\infty}(1). 
\end{equation}
From this, Proposition \ref{prp:weierstrass} follows readily, because $\{\eta_t\colon\, t\geq T\}$ is not precompact in $L^2(M)$ for any $T<+\infty$. 
To prove \eqref{weierstrass_dirichlet}, observe that $\partial_{x_k}\eta_t= t^\epsilon(\partial_{x_k}\eta)_t$ with $\epsilon=1$ if $k\leq s$ and $\epsilon=0$ otherwise. Thus, by \eqref{weierstrass_char},  \begin{eqnarray*}
	|X_j\eta_t|^2 &=& \sum_{k,k'\leq s} \sum_{\ell, \ell'\leq s}t^2x_\ell x_{\ell'} b_{jk\ell} b_{jk'\ell'} (\partial_{x_k}\eta)_t (\partial_{x_{k'}}\eta)_t\\
	&+& 2\sum_{k\leq s}\sum_{k'\geq s+1}\sum_{\ell \leq s} tx_\ell b_{jk\ell}b_{jk'}(\partial_{x_k}\eta)_t (\partial_{x_{k'}}\eta)_t
	\\
	&+& \sum_{k,k'\geq s+1} b_{jk}b_{jk'} (\partial_{x_k}\eta)_t (\partial_{x_{k'}}\eta)_t, 
\end{eqnarray*}
which is a finite sum of terms of the form $t^s f(tx_1,\ldots, tx_s, x_{s+1}, \ldots,x_n)g(x)$, where $f$ is compactly supported and $g$ is smooth. It follows immediately that \[
\mathbf{E}(\eta_t)=\sum_{j=1}^r\int 	|X_j\eta_t|^2 \, dm = O_{t\rightarrow +\infty}(1), 
\] as desired. 

\section{Implications for global regularity of sum-of-squares operators}\label{sec:hypoellipticity}

In this appendix we discuss the connection between precompactness of the Dirichlet energy ball and global regularity properties of an associated sum-of-squares operator, which follows from classical work of Kohn and Nirenberg. Based on this, we make some comments on a result of K. Amano related to our Theorem \ref{thm:sufficient}. \newline 

Let $M$ be a compact manifold without boundary and $X_1,\ldots, X_r$ smooth vector fields on $M$. The non-negative quadratic form \eqref{eq:dirichlet_energy}, defined on its natural domain \[D=\{f\in L^2(M)\colon\, X_jf\in L^2(M)\quad \forall j\},\] is  easily seen to be closed, that is, $D$ is a Hilbert space with respect to the norm $\mathbf{E}(f)+\lVert f\rVert_{L^2(M)}^2$. Thus, $\mathbf{E}$ is the quadratic form of a non-negative self-adjoint operator $\mathcal{L}$, defined as follows: $\mathcal{L}f=g$ if and only if $f\in D$, i.e., it has finite Dirichlet energy, and \[
\int_M g \varphi\, dm = \sum_{j=1}^r\int_M X_jfX_j\varphi\, dm\qquad \forall \varphi\in D.
\]
 By Friedrichs Lemma (see, e.g., \cite[Lemma 25.4]{treves}), $C^\infty(M)$ is a core for the Dirichlet energy, and it is enough to test the above identity on $C^\infty(M)$. Thus, an equivalent definition of $\mathcal{L}$ is the following: $\mathcal{L}f=g$ if and only if $f$ has finite Dirichlet energy, and $\sum_{j=1}^rX_j^\dagger X_jf=g$ in the sense of distributions, where the dagger notation indicates formal adjunction with respect to the background volume form. The partial differential operator with smooth coefficients \begin{equation}\label{sum_of_squares}
 	P=\sum_{j=1}^r X_j^\dagger X_j\end{equation} is a \emph{sum-of-squares operator}. 

By Theorem 3 of \cite{kohn_nirenberg}, if the Dirichlet energy ball $\mathcal{B}$ is precompact, then every distributional solution with finite Dirichlet energy of the equation \[
Pu=f, 
\] with a smooth datum $f$, is itself smooth. Combining this with Theorem \ref{thm:sufficient} above, we obtain the following. 

\begin{cor}\label{cor:glob_hypo} If there exists a natural number $k$ with the property that $Z_k$ is empty, then the sum-of-squares operator \eqref{sum_of_squares} satisfies the following global regularity property: if $u$ has finite Dirichlet energy and $Pu$ is everywhere smooth, then $u$ itself is everywhere smooth. 
\end{cor}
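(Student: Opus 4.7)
The corollary is a direct synthesis of two results already at our disposal: Theorem \ref{thm:sufficient} of this paper and Theorem 3 of \cite{kohn_nirenberg}, quoted at the start of the appendix. The plan is simply to feed the output of the former into the input of the latter.

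First, from the standing hypothesis that $Z_k$ is empty for some $k \in \mathbb{N}$, Theorem \ref{thm:sufficient} gives at once that the Dirichlet energy ball $\mathcal{B}$ is precompact in $L^2(M)$. Second, the Kohn--Nirenberg theorem then asserts that, for such a precompact $\mathcal{B}$, every distributional solution of $Pu = f$ that lies in the natural domain $D$ and has $f \in C^\infty(M)$ is itself smooth. To apply this to our $u$, I would set $f := Pu$, which is smooth by assumption, and observe that the hypothesis "$u$ has finite Dirichlet energy" is precisely the condition $u \in D$. Kohn--Nirenberg then yields $u \in C^\infty(M)$, which is the conclusion of the corollary. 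The identification of the self-adjoint operator $\mathcal{L}$ generated by $\mathbf{E}$ with the distributional sum-of-squares operator $P$ acting on $D$, used implicitly here, is precisely the content of the discussion preceding the corollary (via Friedrichs' lemma and the equivalent characterization of $\mathcal{L}$ recalled there).

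There is essentially no obstacle to overcome: both ingredients are invoked as black boxes, and the appendix has already been organized so that the only step of substance is lining up the verbal statement "precompactness of $\mathcal{B}$" (the output of Theorem \ref{thm:sufficient}) with the verbal hypothesis of the Kohn--Nirenberg regularity theorem. The one bookkeeping point worth checking, namely that $\mathcal{L}u = Pu$ in the sense of distributions whenever $u \in D$, has already been verified in the paragraphs preceding the corollary; no new analytic input is required.
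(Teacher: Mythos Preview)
Your proposal is correct and matches the paper's own argument essentially verbatim: the corollary is stated immediately after the sentence ``Combining this with Theorem \ref{thm:sufficient} above, we obtain the following,'' which is exactly the two-step synthesis (Theorem \ref{thm:sufficient} $\Rightarrow$ precompactness of $\mathcal{B}$, then Kohn--Nirenberg $\Rightarrow$ global regularity) that you spell out. No additional idea is needed or used.
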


We remark that a variant of the above corollary holds on arbitrary (not necessarily compact) manifolds, if \emph{the degeneration locus $Z$ is compact} and the global regularity property is modified as follows: \emph{if $u\in \mathcal{D}'(M)$ has locally finite Dirichlet energy in a neighborhood of $Z$, that is, $u$ and each $X_ju$ are square-integrable in some neighborhood of $Z$, and $Pu$ is everywhere smooth, then $u$ is smooth}. Let us sketch a proof of this generalization. Since $Z$ is compact, it is always possible to modify the vector fields $X_j$ and the ambient manifold $M$ away from $Z$ in such a way that the new manifold $\widetilde{M}$ is compact and the new vector fields $\widetilde{X_j}$ span the tangent space outside a neighborhood of $Z$. Since the original sum-of-squares operator $P$ is hypoelliptic outside $Z$ by Hörmander's theorem \cite{hormander}, any distributional solution $u$ of $Pu=f$ with $f$ smooth may be modified away from $Z$ in such a way that the new distribution $\widetilde{u}$ is a solution of a new equation $\widetilde{P}\widetilde{u}=\widetilde{f}$ with $\widetilde{f}$ smooth and $\widetilde{P}$ a sum-of-squares operator associated to the new vector fields $\widetilde{X_j}$. The corollary above then implies that $\widetilde{u}$, and hence $u$, is smooth in a neighborhood of $Z$, as we wanted.  \newline 

Notice that one cannot derive local hypoellipticity from the assumptions of Corollary \ref{cor:glob_hypo}. In fact, it has been shown by Kusuoka and Stroock that the sum-of-squares operator associated to the following vector fields on $M=\R^3_{x_1,x_2,x_3}$: \[
X_1=\partial_{x_1}, \quad X_2 = e^{-\frac{1}{|x_1|}}\partial_{x_2}, \quad X_3=\partial_{x_3}
\]
is not locally hypoelliptic (see \cite[Theorem 8.41]{kusuoka_stroock}). It is easy to see that $Z_1$ is empty in this case. (In passing, we observe that the above vector fields are definable in an appropriate o-minimal structure). \newline 

We now comment on K. Amano's work \cite{amano} on global hypoellipticity for second order linear PDEs. The setting of \cite{amano} is an open set $\Omega\subseteq \R^n$. We recall that the linear partial differential operator $P$ is said to be \emph{globally hypoelliptic} if any globally defined distribution $u\in \mathcal{D}'(\Omega)$, such that $Pu\in C^\infty(\Omega)$, is automatically smooth. When specialized to sum-of-squares operators of the form \eqref{sum_of_squares}, Theorem 1 in Amano's paper states that the following is a sufficient condition for global hypoellipticity: \emph{the degeneration locus $Z\subseteq \Omega$ of the family of vector fields $X_1,\ldots, X_r$ is compact and there exists $\Phi\in C^\infty(\Omega)$ and finitely many vector fields $Y_1,\ldots, Y_N$ in the $C^\infty(\Omega)$-module generated by $X_1, \ldots, X_r$, such that $Z=\{p\in \Omega\colon\, \Phi(p)=0\}$ and}  \[
Z=\bigcup_{k=1}^N\{p\in Z \colon\, Y_k\cdots Y_1\Phi(p)\neq 0\}. 
\]
Notice that this condition generalizes a previous one by Oleinik--Radkevič (see Theorem 2.5.3 of \cite{oleinik_radkevic}). 
Amano's condition implies the eventual emptiness of iterated characteristic sets. In fact, a simple inductive argument shows that \[
Z_k\subseteq \bigcap_{i=1}^k \{p\in Z\colon\, Y_i\cdots Y_1\Phi(p)= 0\}
\]
for every $k\geq 1$, which immediately yields emptiness of $Z_N$. Thus, both the hypothesis and the conclusion in Corollary \ref{cor:glob_hypo} are weaker than in Amano's theorem. We find the assumption of our corollary (and Theorem \ref{thm:sufficient} above) to be conceptually clearer, and it seems plausible that working slightly harder one might derive genuine global hypoellipticity from it.

\end{document}